\newtheorem{theorem}{Theorem}[section]
\newtheorem{fact}[theorem]{Fact}
\newtheorem{lemma}[theorem]{Lemma}
\newtheorem{corollary}[theorem]{Corollary}
\newtheorem{proposition}[theorem]{Proposition}
\theoremstyle{definition}
\newtheorem{definition}[theorem]{Definition}
\newtheorem{example}[theorem]{Example}
\newtheorem{remark}[theorem]{Remark}
\newcommand{\abar}{\bar{a}}
\newcommand{\bbar}{\bar{b}}
\newcommand{\cbar}{\bar{c}}
\newcommand{\dbar}{\bar{d}}
\newcommand{\ubar}{\bar{u}}
\newcommand{\vbar}{\bar{v}}
\newcommand{\xbar}{\bar{x}}
\newcommand{\ybar}{\bar{y}}
\newcommand{\zbar}{\bar{z}}
\def\seq{\subseteq}
\def\nv{\text{-}}
\def\inv{^{\text{-}1}}
\def\F{\mathbb{F}}
\def\smd{\raisebox{.4pt}{\textrm{\scriptsize{~\!$\triangle$\!~}}}}
\newcommand{\claim}{\hfill$\dashv$}
\def\Th{\operatorname{Th}}
\def\N{\mathbb{N}}
\def\R{\mathbb{R}}
\def\Z{\mathbb{Z}}
\def\Q{\mathbb{Q}}
\def\T{\mathbb{T}}
\def\cL{\mathcal{L}}
\def\cU{\mathcal{U}}
\def\cP{\mathcal{P}}
\def\cB{\mathcal{B}}
\def\cG{\mathcal{G}}
\def\cI{\mathcal{I}}
\def\tp{\operatorname{tp}}
\def\an{\operatorname{an}}
\title[Structure and regularity for VC-sets in groups]{Structure and regularity for subsets of groups with finite VC-dimension}
\date{May 25, 2020}
\author[G. Conant]{G. Conant}
\thanks{
2010 \emph{MSC}. Primary: 03C45, 03C20, 20D60, 22C05; Secondary: 22E35.\\
\indent The authors were partially supported by NSF grants: DMS-1855503 (Conant); DMS-136702, DMS-1665035, DMS-1790212 (Pillay); DMS-1855711 (Terry).}
\address{DPMMS\\
University of Cambridge\\
Cambridge CB3 0WB\\
 UK}
\email{gconant@maths.cam.ac.uk}
\author[A. Pillay]{A. Pillay}
\address{Department of Mathematics\\
University of Notre Dame\\
Notre Dame IN 46656\\
 USA}
\email{apillay@nd.edu}
\author[C. Terry]{C. Terry}
\address{Department of Mathematics\\
University of Chicago\\
Chicago IL 60637\\
 USA}
\email{caterry@uchicago.edu}
\begin{document}

\begin{abstract}
Suppose $G$ is a finite group and $A\seq G$ is such that $\{gA:g\in G\}$ has VC-dimension strictly less than $k$. We find algebraically well-structured sets in $G$ which, up to a chosen $\epsilon>0$, describe the structure of $A$ and behave regularly with respect to translates of $A$. For the subclass of groups with uniformly fixed finite exponent $r$, these algebraic objects are normal subgroups with index  bounded in terms of $k$, $r$, and $\epsilon$. For arbitrary groups, we use Bohr neighborhoods of bounded rank and width inside normal subgroups of bounded index.  Our proofs are largely model theoretic, and heavily rely on a structural analysis of compactifications of pseudofinite groups as inverse limits of Lie groups. The introduction of Bohr neighborhoods into the nonabelian setting uses model theoretic methods related to the work of Breuillard, Green, and Tao \cite{BGT} and Hrushovski \cite{HruAG} on  approximate groups, as well as a result of Alekseev, Glebski\v{\i}, and Gordon \cite{AlGlGo} on approximate homomorphisms.
\end{abstract}

\maketitle

\vspace{-15pt}

\section{Introduction and statement of results}
Szemer\'{e}di's Regularity Lemma \cite{SzemRL} is a fundamental result about graphs, which has found broad applications in graph theory, computer science, and arithmetic combinatorics. Roughly speaking, the regularity lemma  partitions large graphs into few pieces so that almost all pairs of pieces have uniform edge density. 
In 2005, Green \cite{GreenSLAG} proved the first \emph{arithmetic regularity lemma}, which uses discrete Fourier analysis  to define arithmetic notions of regularity for subsets of finite abelian groups.   For groups of the form $\F_p^n$, Green's result states that given $A\subseteq \F_p^n$, there is  $H\leq \F_p^n$ of bounded index such that $A$ is uniformly distributed in almost all cosets of $H$ (as quantified by Fourier analytic methods; see \cite[Theorem 2.1]{GreenSLAG}).  Arithmetic regularity lemmas, and their higher order analogues (see \cite{Gowers:2011et,Green:2010fk}), are now important tools in arithmetic combinatorics. From a general perspective, one can view regularity lemmas as tools for decomposing mathematical objects into ingredients that are easier to study because they are either highly structured (e.g.,  the cosets of a subgroup) or highly random (e.g., uniformly distributed).

Recently, a large body of work has developed around  strengthened regularity lemmas for classes of graphs which forbid some particular bipartite configuration. This setting is fundamental in both combinatorics and model theory, although often for very different reasons. In combinatorics, forbidden configurations can lead to significant \emph{quantitative} improvements in results about graphs, and several well-known open problems arise in this pursuit (e.g., the Erd\H{o}s-Hajnal conjecture; see 1.4 of \cite{ErHa}, and also \cite{FPS-eh}). In model theory, the focus is usually on infinite objects, and forbidden configurations are used to obtain \emph{qualitative} results about definable sets in mathematical structures. Indeed, much of modern model theory emerged from the study of mathematical structures in which every definable bipartite graph omits a finite ``half-graph" as an induced subgraph (such structures are called \emph{stable}). 

In combinatorics and model theory, the practice of forbidding finite bipartite configurations is rigorously formulated using VC-dimension. By definition, the \textbf{VC-dimension} of a \emph{bipartite} graph $(V,W;E)$ is the supremum of all $k\in\Z^+$ such that $(V,W;E)$ contains $([k],\cP([k]);\in)$ as an induced subgraph (where $[k]=\{1,\ldots,k\}$). We call $(V,W;E)$ \textbf{$k$-NIP} if it has VC-dimension at most $k-1$.\footnote{This terminology is from model theory, where a bipartite graph with infinite VC-dimension is said to have the \emph{independence property}, and so NIP stands for ``no independence property".} While this definition is based on omitting one specific bipartite graph, an illuminating exercise is that if $(V,W;E)$ omits \emph{some} finite bipartite graph $(V',W',E')$ as an induced subgraph, then $(V,W;E)$ is $k$-NIP for some   $k\leq |V'|+\lceil\log_2|W'|\rceil$.  Therefore, having finite VC-dimension is equivalent to omitting \emph{some} finite bipartite configuration. 

In \cite[Lemma 1.6]{Alon:2007gu}, Alon, Fischer, and Newman proved a strengthened regularity lemma for finite graphs of bounded VC-dimension\footnote{The VC-dimension of a graph $(V;E)$ is that of its ``bipartite double cover" $(V,V;E)$.}, in which the bound on the size of the partition is polynomial in the degree of irregularity (in contrast to Szemer\'{e}di's original work, where these bounds are necessarily tower-type \cite{Gowers:1997fh}), and the edge density in any regular pair is close to $0$ or $1$. This latter condition says that, as a bipartite graph, each regular pair is almost empty or complete, and so the normally ``random" ingredients of Szemer\'{e}di regularity are in fact highly structured. Graph regularity with bounded VC-dimension was also developed by Lov\'{a}sz and Szegedy in \cite{LovSzeg}, and similar results have been found for  hypergraphs  (e.g., \cite{FGLNP, FPS,FPS-eh}), as well as for various model theoretic settings inside NIP  (see \cite{Basu:2009cr, ChSt, ChStNIP, MaShStab}). The strongest conclusion is for \emph{stable graphs}\footnote{A bipartite graph  is called \textbf{$k$-stable} if it omits $([k],[k];\leq)$ as an induced subgraph; and a graph is \textbf{$k$-stable} if its bipartite double cover is. Note that a $k$-stable bipartite graph is $k$-NIP.}, where Malliaris and Shelah  prove the existence of regular partitions with polynomial bounds, \emph{no irregular pairs}, and the same ``$0$-$1$" behavior of edge densities in regular pairs (see \cite[Theorem 5.18]{MaShStab}).

The goal of this article is to develop arithmetic regularity for arbitrary finite groups in the context of forbidden bipartite configurations, as quantified by VC-dimension. In analogy to the case of graphs, we show that by forbidding finite bipartite configurations, one obtains a strengthened version of arithmetic regularity in which the normally random ingredients are instead highly structured. Moreover, our proof methods deepen the connection between model theory and arithmetic combinatorics, in that we use pseudofinite methods to extend combinatorial results for finite abelian groups to the nonabelian setting. This is in the same vein as Hrushovski's \cite{HruAG} celebrated work on approximate groups, and the subsequent structure theory proved by Breuillard, Green, and Tao \cite{BGT}. We will use similar techniques  in order to formulate arithmetic regularity in nonabelian groups using \emph{Bohr neighborhoods}, which are fundamental objects from arithmetic combinatorics in abelian groups. Finally, our results show that arithmetic regularity for NIP sets in finite groups coincides with a certain model theoretic phenomenon called ``compact domination". This notion was first isolated by Hrushovski, Peterzil, and the second author \cite{HPP} in their proof of the so-called ``Pillay conjectures" for groups definable in o-minimal theories, and later played an important role in the study of definably amenable groups definable in NIP theories \cite{ChSi,HPP, HP,HPS}. 

Before stating the main results of this paper, we briefly recall previous work on stable arithmetic regularity, as it provides a template for  the ``structure and regularity" statements we will obtain in the NIP setting. Given a group $G$ and a subset $A\seq G$, we define the bipartite graph $\Gamma_G(A)=(V,W;E)$ where $V=W=G$ and $E=\{(x,y)\in G^2:yx\in A\}$.\footnote{This is a bipartite analogue of the ``Cayley sum-graph of $A$ in $G$", as defined in \cite{TeWo} for abelian groups. The reason we use $yx\in A$ rather than $xy\in A$ is due to the model theoretic preference for ``left-invariant" formulas (see Section \ref{sec:NIPsetsG} for details).} Given $k\geq 1$, we say that a subset $A$ of a group $G$ is \textbf{$k$-NIP} (respectively, \textbf{$k$-stable}) if $\Gamma_G(A)$ is $k$-NIP (respectively, $k$-stable), as defined above. In \cite{TeWo}, the third author and Wolf  developed arithmetic regularity for $k$-stable subsets of $\F_p^n$. They proved that such sets satisfy a strengthened version of Green's arithmetic regularity lemma above, in which there is an efficient bound on the index of $H$ and $A$ is uniformly distributed in \emph{all} cosets of $H$. They also show that a $k$-stable subset of $\F_p^n$ is approximately a union of cosets of a subgroup of small index, which is an arithmetic analogue of ``$0$-$1$ density" in regular pairs. In \cite[Theorem 1.2]{CPT}, we generalized and strengthened the results from \cite{TeWo} on $\F_p^n$ to the setting of arbitrary finite groups, but without explicit bounds.\footnote{Quantitative results for stable sets in finite abelian groups were later proved by the third author and Wolf (see \cite[Theorem 4]{TeWo2}), and more recently for arbitrary finite groups by the first author (see Theorems 1.3 and 1.4 of \cite{CoQSAR}).} 

\begin{theorem}\textnormal{\cite{CPT}}\label{thm:CPT1}
For any $k\geq 1$ and $\epsilon>0$, there is $n=n(k,\epsilon)$ such that the following holds. Suppose $G$ is a finite group and $A\seq G$ is $k$-stable. Then there is a normal subgroup $H\leq G$, of index at most $n$, satisfying the following properties.
\begin{enumerate}[$(i)$]
\item \textnormal{(structure)} There is a set $D\seq G$, which is a union of cosets of $H$, such that
\[
|A\smd D|<\epsilon|H|.
\]
\item \textnormal{(regularity)} For any $g\in G$, either $|gH\cap A|<\epsilon|H|$ or $|gH\backslash A|<\epsilon |H|$.
\end{enumerate}
Moreover, $H$ is in the Boolean algebra generated by $\{gAh:g,h\in G\}$.
\end{theorem}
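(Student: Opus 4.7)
My plan is to argue by contradiction via a pseudofinite reduction, then apply local stability inside an ultraproduct. Suppose the conclusion fails for some $k$ and $\epsilon > 0$, so there is a sequence $(G_n, A_n)$ of finite $k$-stable counterexamples with no good subgroup of index at most $n$. In the group language augmented by a unary predicate $P$ (interpreted as $A_n$ in each coordinate), form the ultraproduct $G = \prod_n G_n / \cU$ and let $A = P^G$. By \L{}o\'s's theorem, the instance formula $\varphi(x;y) := P(y^{-1} x)$ is $k$-stable in $\Th(G, \cdot, P)$, and the normalized counting measures descend to a bi-translation-invariant Keisler measure $\mu$ on $G$.

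The crux is to exploit stability of $\varphi$ to locate a normal subgroup of $G$ which is a bounded intersection of $\varphi$-definable subgroups, and on whose cosets $A$ behaves generically. Since $\varphi$ is stable and $\mu$ is $\varphi$-invariant, there is a unique global $\varphi$-generic extension of $\mu$; its $\varphi$-type $p$ is $\varphi$-definable, and the $\varphi$-connected component $H^{00} := G^{00}_\varphi$, i.e.\ the intersection of all $\varphi$-definable subgroups of finite index, is a normal type-definable subgroup of $G$ of bounded index that acts trivially on $p$. The generic type $p$ partitions the cosets of $H^{00}$ into two classes: those on which $A$ is $\mu$-conull and those on which $A$ is $\mu$-null, yielding exact analogues of (i) and (ii) at the infinitary level, with the set $D$ arising as the union of the $\mu$-conull cosets.

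By compactness, finitely many $\varphi$-definable subgroups $H_1, \dots, H_m \supseteq H^{00}$ suffice, with intersection $H$ lying in the Boolean algebra $\cB$ generated by $\{g A h : g, h \in G\}$, to realize the approximate structure (i) and regularity (ii) with error $\epsilon$, and with $[G:H]$ bounded in terms of $k$ and $\epsilon$. Transferring back by \L{}o\'s, the corresponding Boolean combinations in $G_n$ yield normal subgroups $H_n \leq G_n$ in the Boolean algebra of $\{g A_n h\}$, of uniformly bounded index, satisfying (i) and (ii) for $\cU$-almost all $n$; this contradicts the choice of $(G_n, A_n)$. The main obstacle I anticipate is the moreover clause, i.e.\ showing $H^{00}$ is a directed intersection of subgroups in $\cB$ rather than merely type-definable in the full induced language on $G$. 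This requires genuine local stability of $\varphi$ in the sense of Shelah, specifically $\varphi$-definability of $\varphi$-types and the analysis of the $\varphi$-connected component, which is precisely where the stable (not merely NIP) hypothesis on $A$ is essential and where a naive appeal to general NIP machinery would fall short.
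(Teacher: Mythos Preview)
Your approach is correct and matches the paper's strategy: the paper does not prove Theorem~\ref{thm:CPT1} independently (it is quoted from \cite{CPT}), but Remark~\ref{rem:profstab} explains exactly the derivation you outline---pass to a saturated pseudofinite ultraproduct, use that for a stable formula the quotient $G/G^{00}_{\theta^r}$ is \emph{finite} (so $G^{00}_{\theta^r}$ is outright $\theta^r$-definable and the error set $Z$ in Theorem~\ref{thm:UPprof} vanishes), and transfer back. One technical correction: to secure normality of the connected component and land in the Boolean algebra generated by \emph{two-sided} translates $\{gAh:g,h\in G\}$, you must work with $\theta^r(x;y,u):=P(y^{-1}xu)$ rather than the one-sided $\varphi(x;y)=P(y^{-1}x)$ you wrote down; this is precisely why the paper uses $\theta^r$ throughout, and it resolves the ``moreover'' clause you flagged as the main obstacle.
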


Our first result on arithmetic regularity in the setting of bounded VC-dimension is for $k$-NIP subsets of finite groups with \emph{uniformly bounded exponent}.

\newtheorem*{thm:regexp}{Theorem \ref{thm:regexp}}
\begin{thm:regexp}
\emph{
 For any $k,r\geq 1$ and $\epsilon>0$, there is $n=n(k,r,\epsilon)$ such that the following holds. Suppose $G$ is a finite group of exponent $r$, and $A\seq G$ is $k$-NIP. Then there are
\begin{enumerate}[\hspace{5pt}$\ast$]
\item  a normal subgroup $H\leq G$ of index at most $n$, and
\item   a set $Z\seq G$, which is a union of cosets of $H$ with $|Z|<\epsilon|G|$,
\end{enumerate}
satisfying the following properties.
\begin{enumerate}[$(i)$]
\item \textnormal{(structure)} There is a set $D\seq G$, which is a union of cosets of $H$, such that
\[
|(A\backslash Z)\smd D|<\epsilon|H|.
\]
\item \textnormal{(regularity)} For any $g\in G\backslash Z$, either $|gH\cap A|<\epsilon|H|$ or $|gH\backslash A|<\epsilon|H|$.
\end{enumerate}
Moreover, $H$ is in the Boolean algebra generated by $\{gAh:g,h\in G\}$.
}
\end{thm:regexp}

Thus the behavior of NIP sets in bounded exponent groups is almost identical to that of stable sets in arbitrary finite groups, where the only difference is the error set $Z$. This reflects similar behavior in graph regularity, where the main difference between the stable and NIP cases is the need for irregular pairs.  Theorem \ref{thm:regexp} also qualitatively generalizes and strengthens a recent quantitative result of Alon, Fox, and Zhao \cite[Theorem 1.1]{AFZ} on $k$-NIP subsets of finite \emph{abelian} groups of uniformly bounded exponent.
A  version of Theorem \ref{thm:regexp} with polynomial bounds (in $\epsilon\inv$), but weaker qualitative ingredients, is conjectured in \cite{AFZ}.\footnote{This weaker version of Theorem \ref{thm:regexp} was later proved by the first author with a bound of the form $n=\exp(c_{k,r}\epsilon^{\nv k})$, where $c_{k,r}$ is an ineffective constant (see \cite[Theorem 1.6]{CoBogo}).}

We then turn to $k$-NIP sets in arbitrary finite groups. In this case, one cannot expect a statement involving only subgroups, as in Theorem \ref{thm:regexp}. Indeed, as noted in \cite[Section 5]{AFZ}, if $p\geq 3$ is prime and $A=\{1,2,\ldots,\lfloor \frac{p}{2}\rfloor\}$, then $A$ is $4$-NIP as a subset of $\Z/p\Z$, but $A$ cannot be approximated as in Theorem \ref{thm:regexp} for arbitrarily small $\epsilon$. This example illustrates a common obstacle faced in arithmetic combinatorics when working in abelian groups with very few subgroups. In situations like this, one often works instead with certain well-structured subsets of groups called  \emph{Bohr neighborhoods}. Bohr neighborhoods in cyclic groups were used in Bourgain's improvement of Roth's Theorem  \cite[(0.11)]{BourgTAP}, and also form the basis of Green's arithmetic regularity lemma for finite abelian groups \cite[Theorem 5.2]{GreenSLAG}. Results related to ours involving Bohr neighborhoods, with quantitative bounds but for abelian groups, were independently obtained by Sisask  \cite[Theorem 1.4]{Sisask}.

Given a group $H$, an integer $r\geq 0$, and some $\delta>0$, we define a \textbf{$(\delta,r)$-Bohr neighborhood in $H$} to be a set of the form $B^r_{\tau,\delta}:=\{x\in H:d(\tau(x),0)<\delta\}$, where $\tau\colon H\to\T^r$ is a group homomorphism and $d$ is a fixed invariant metric on the $r$-dimensional torus $\T^r$ (see Remark \ref{rem:Liemetric} and Definition \ref{def:BohrT}). Our main structure and regularity result for NIP sets in finite groups is as follows.

\newtheorem*{thm:mainNIP}{Theorem \ref{thm:mainNIP}}
\begin{thm:mainNIP}
\emph{
For any $k\geq 1$ and $\epsilon>0$ there is $n=n(k,\epsilon)$ such that the following holds. Suppose $G$ is a finite group and $A\seq G$ is $k$-NIP. Then there are
\begin{enumerate}[\hspace{5pt}$\ast$]
\item  a normal subgroup $H\leq G$ of index $m\leq n$, 
\item  a $(\delta,r)$-Bohr neighborhood $B$ in $H$, where $0\leq r\leq n$ and $\frac{1}{n}\leq\delta\leq 1$, and
\item a subset $Z\seq G$, with $|Z|<\epsilon|G|$,
\end{enumerate}
satisfying the following properties. 
\begin{enumerate}[$(i)$]
\item \textnormal{(structure)} There is a set $D\seq G$, which is a union of at most $m(\frac{2}{\delta})^r$ translates of $B$, such that 
\[
|(A\smd D)\backslash Z|< \epsilon|B|.
\]
\item \textnormal{(regularity)} For any $g\in G\backslash Z$, either $|gB\cap A|<\epsilon|B|$ or $|gB\backslash A|< \epsilon|B|$.
\end{enumerate}
Moreover, $H$ and $Z$ are in the Boolean algebra generated by $\{gAh:g,h\in G\}$, and if $G$ is abelian then we may assume $H=G$.
}
\end{thm:mainNIP}

In order to prove Theorems \ref{thm:regexp} and \ref{thm:mainNIP}, we will first prove companion theorems for these results involving definable sets in infinite pseudofinite groups (Theorems \ref{thm:UPprof} and \ref{thm:UPgen}, respectively). We then  prove the theorems about finite groups by taking ultraproducts of counterexamples in order to obtain  infinite pseudofinite groups contradicting the companion theorems. To prove the companion theorems, we work with a saturated pseudofinite group $G$, and an invariant NIP formula $\theta(x;\ybar)$ (see Definitions \ref{def:localdefns} and \ref{def:NIP}). In \cite{CPpfNIP}, the first two authors proved ``generic compact domination" for the quotient group $G/G^{00}_{\theta^r}$, where $\theta^r(x;\ybar,u):=\theta(x\cdot u;\ybar)$ and $G^{00}_{\theta^r}$ is the intersection of all $\theta^r$-type-definable bounded-index subgroups of $G$ (see Definition \ref{def:G00dr}).  In this case, $G/G^{00}_{\theta^r}$ is a compact Hausdorff group, and generic compact domination roughly states that if $A\seq G$ is $\theta^r$-definable, then the set of cosets of $G^{00}_{\theta^r}$, which intersect both $A$ and $G\backslash A$ in ``large" sets with respect to the pseudofinite counting measure, has Haar measure $0$ (see Theorem \ref{thm:G00CP}). This is essentially a regularity statement for $A$ with respect to the subgroup $G^{00}_{\theta^r}$, which is rather remarkable as generic compact domination originated in \cite{HPP} toward proving conjectures of the second author on the Lie structure of groups definable in o-minimal theories \cite[Conjecture 1.1]{PilCLG}. 

For $G$ and $\theta(x;\ybar)$ as above, the regularity provided by generic compact domination for $\theta^r$-definable sets in $G$ cannot be transferred directly to finite groups, as the statement depends entirely on type-definable data (such as $G^{00}_{\theta^r}$). Thus, much of the work in this paper focuses on obtaining definable approximations to $G^{00}_{\theta^r}$ and the other objects involved in generic compact domination. We first investigate the situation when $G/G^{00}_{\theta^r}$ is a profinite group, in which case $G^{00}_{\theta^r}$ can be approximated by definable finite-index subgroups of $G$. Using this, we prove Theorem \ref{thm:UPprof} (the pseudofinite companion to Theorem \ref{thm:regexp} above). The connection to Theorem \ref{thm:regexp} is that if $G$ is elementarily equivalent to an ultraproduct of groups of uniformly bounded exponent, then $G/G^{00}_{\theta^r}$ is a compact Hausdorff group of finite exponent, hence is profinite (see Fact \ref{fact:compactG}$(d)$).

When $G/G^{00}_{\theta^r}$ is not profinite,  there are not enough definable finite-index subgroups available to describe $G^{00}_{\theta^r}$, and it is for this reason that we turn to Bohr neighborhoods. This is somewhat surprising, as Bohr neighborhoods are fundamentally linked to abelian groups, and we do not make any assumptions of commutativity. However,  by a general result of the second author on ``definable" compactifications of pseudofinite groups, we in fact have that the \emph{connected component} of $G/G^{00}_{\theta^r}$ is abelian (see Theorem \ref{thm:comm}). It is at this point that we see the beautiful partnership between pseudofinite groups and NIP formulas. Specifically, we have generic compact domination of $\theta^r$-definable sets by the abelian-by-profinite group $G/G^{00}_{\theta^r}$, which allows us to analyze $\theta^r$-definable sets in $G$ using Bohr neighborhoods in definable finite-index subgroups. In order to obtain a statement involving only \emph{definable} objects, we use approximate homomorphisms to formulate a notion of approximate Bohr neighborhoods. This leads to Theorem \ref{thm:UPgen} (the pseudofinite companion of Theorem \ref{thm:mainNIP} above). We then apply a result of Alekseev, Glebski\v{\i}, and Gordon \cite[Theorem 5.13]{AlGlGo} on approximate homomorphisms  to find actual Bohr neighborhoods inside approximate Bohr neighborhoods and, ultimately, prove Theorem \ref{thm:mainNIP}.

In Section \ref{sec:distal}, we prove similar results for \emph{fsg} groups definable in \emph{distal} NIP theories (see Theorems \ref{thm:distalgen} and \ref{thm:distalprof}). This follows our theme, as such groups satisfy a strong form of compact domination (see Lemma \ref{lem:CD}). In Section \ref{sec:padic}, we discuss compact $p$-adic analytic groups as one concrete example of the distal \emph{fsg} setting. 

\subsection*{Acknowledgements}  Much of the work in this paper was carried out during the 2018 Model Theory, Combinatorics and Valued Fields trimester program at Institut Henri Poincar\'{e}. We thank IHP for their hospitality. We also thank Julia Wolf for  helpful conversations, and the anonymous referee for their careful reading and for providing many valuable revisions, corrections, and suggestions.

\section{Preliminaries}\label{sec:pre}

\subsection{First-order structures and definability} We start by establishing the setting involving first-order structures. See \cite{Mabook} for an introduction to first-order logic and model theory. 

Let $\cL$ be a first-order language. Following model-theoretic convention, we will say that an $\cL$-structure $M^*$ is \textbf{sufficiently saturated} if $M^*$ is $\kappa$-saturated and strongly $\kappa$-homogeneous for some large (e.g., strongly inaccessible) cardinal $\kappa$. Let $M^*$ be a fixed sufficiently saturated $\cL$-structure. We say that a set is \textbf{bounded} if its cardinality is strictly less than the saturation cardinal $\kappa$ of $M^*$.\footnote{This is not to be confused with later uses of the phrase ``uniformly bounded" in the context of theorems about finite groups.} When $\cL$ and $M^*$ are fixed, we will refer to an $\cL$-formula with parameters from $M^*$ as simply a \textbf{formula}. We also call an elementary substructure $M\prec M^*$ \textbf{small} if its universe is bounded. As usual, a subset $X\seq (M^*)^n$ is \textbf{definable} if there is a formula $\phi(x_1,\ldots,x_n)$ such that $X=\phi(M^*):=\{\abar\in (M^*)^n:M^*\models\phi(\abar)\}$; and $X\seq (M^*)^n$ is \textbf{type-definable} if it is an intersection of a bounded number of definable subsets of $(M^*)^n$. We will further say that $X\seq (M^*)^n$ is \textbf{countably-definable} if it is an intersection of countably many definable subsets of $(M^*)^n$. 

Many of our results will operate in a ``local" model-theoretic setting, which is to say that we focus on a single formula $\theta(x;\ybar)$ whose free variables  are partitioned into a singleton $x$ and a finite tuple $\ybar$.

\begin{definition}\label{def:localdefns}
Let $\theta(x;\ybar)$ be a formula.
\begin{enumerate}
\item A \textbf{$\langle \theta\rangle$-formula} is a formula $\phi(x;\ybar_1,\ldots,\ybar_n)$ obtained as a Boolean combination of $\theta(x;\ybar_1),\ldots,\theta(x;\ybar_n)$ for some $n\geq 1$ and variables $\ybar_1,\ldots,\ybar_n$.
\item An \textbf{instance of $\theta(x;\ybar)$} is a formula of the form $\theta(x;\bbar)$ for some $\bbar\in (M^*)^{|\ybar|}$. 
\item A \textbf{$\theta$-formula} $\phi(x)$ is an instance of a $\langle \theta\rangle$-formula $\phi(x;\ybar_1,\ldots,\ybar_n)$.
\item A set $X\seq M^*$ is \textbf{$\theta$-definable} if $X=\phi(M^*)$ for some $\theta$-formula $\phi(x)$. 
\item A set $X\seq M^*$ is \textbf{$\theta$-type-definable} (respectively, \textbf{$\theta$-countably-definable}) if it is an intersection of a bounded (respectively, countable) number of $\theta$-definable subsets of $M^*$. 
\end{enumerate}
\end{definition}

Note that if $\theta(x;\ybar)$ is a formula, and $X\seq M^*$ is a set that is both countably-definable and $\theta$-type-definable, then $X$ is $\theta$-countably-definable by an easy saturation argument.

We end this subsection with some discussion on our conventions regarding definable groups. In general, we will work with a sufficiently saturated structure $M^*$ and consider a group $G$ definable in $M^*$. (For example, this will be the setting for the results in Sections \ref{sec:distal} and \ref{sec:padic}.) Thus most of the preliminaries below will take place in this context. On the other hand, the  results in Sections \ref{sec:prof} and \ref{sec:gen} are in the ``local setting" discussed above, and thus we will work in the situation where $\cL$ is an expansion of the group language\footnote{In fact, the group language expanded by a single unary relation symbol $A$ suffices for our main results on finite groups (Theorems \ref{thm:regexp} and \ref{thm:mainNIP}).} and $M^*$ expands a group. So in this case, we use $G$ in place of $M^*$, and we just say that $G$ is  a \emph{sufficiently saturated expansion of a group}.

In some situations, we may start by working in the setting of groups definable in $M^*$ and then, for certain results, move to the latter setting where $M^*$ is itself an expansion of a group. To clarify this, we adopt the following convention. Given a sufficiently saturated structure $M^*$ and a group $G$ definable in $M^*$, we will write $G=M^*$ to signify the additional assumption that $\cL$ contains the language of groups, $M^*$ is an expansion of a group, and $G$ is defined by $x=x$. In other words, this assumption moves us to the setting where we work with a sufficiently saturated expansion of a group (as discussed above).  This will be done primarily for the sake of convenience, and in order to avoid introducing further terminology and notation. 
 
 \begin{definition}
 Let $G$ be a sufficiently saturated expansion of a group. A formula $\theta(x;\ybar)$ is \textbf{(left) invariant} if, for any $a\in G$ and $\bbar\in G^{|\ybar|}$, there is $\cbar\in G^{|\ybar|}$ such that $\theta(a\cdot x;\bbar)$ and $\theta(x;\cbar)$ define the same subset of $G$.
 \end{definition}

The typical example of an invariant formula is something of the form $\theta(x;y):=\phi(y \cdot x)$, where $\phi(x)$ is a formula in one variable. Note also that if $\theta(x;\ybar)$ is an invariant formula then any $\langle \theta\rangle$-formula is invariant as well.

\subsection{Compact quotients}\label{sec:preCQ}

By convention, when we say that a topological space is  \textbf{compact}, we mean \emph{compact and Hausdorff}. We will frequently use the fact that a compact space is second-countable if and only if it is separable and metrizable (e.g., by \emph{Urysohn's Metrization Theorem}). Given a topological group $K$, we let $K^0$ denote the connected component of the identity in $K$, which is a closed normal subgroup of $K$. 
A topological group  is called \textbf{profinite} if it is a projective limit of finite groups. By a \textbf{Lie group}, we mean a finite-dimensional real Lie group. For $n\in\N$, let $\T^n$ denote the $n$-dimensional torus, where $\T=\R/\Z$ (so $\T^0$ is the trivial group). We view $\T^n$ as a compact topological group (with the product of the quotient topology on $\R/\Z$). We use $\cong$ for isomorphism of topological groups.

The structure theory for compact groups as projective limits of compact Lie groups will play a significant role in this paper. We refer the reader to \cite[Section 1.1]{RZbook} for details on projective limits of topological spaces. Each projective limit below will be indexed by a \textbf{join semilattice} (\emph{jsl}), i.e., a poset $I=(I,\leq)$ such that any finite set $F\seq I$ has a least upper bound (denoted $\sup F$). When we refer to $\N$ as a \emph{jsl}, we always use the usual ordering. 

The following are several important facts about compact groups that we will need at various points throughout the paper. 

\begin{fact}\label{fact:compactG}$~$
\begin{enumerate}[$(a)$]
\item If $K$ and $L$ are compact groups, and $\pi\colon K\to L$ is a surjective continuous homomorphism, then $\pi(K^0)=L^0$.
\item A  compact group $K$ is profinite if and only if $K^0$ is trivial.  \item A continuous homomorphic image of a profinite group is profinite.
\item Any compact torsion group is profinite.  
\item Any compact second-countable group admits a bi-invariant compatible metric. 
\item Any compact group $K$ admits a unique left-invariant regular Borel probability measure $\eta_K$ (called the \textbf{normalized Haar measure} on $K$).\footnote{In fact, $\eta_K$ is also right-invariant and is the unique right-invariant regular Borel probability measure on $K$.}
\item If $K$ is a compact abelian Lie group, then $K\cong\T^n\times F$ for some finite group $F$ and some $n\in\N$.
\item If $K$ is a compact group then there is a jsl $I$ and a projective system $(L_i)_{i\in I}$ of compact Lie groups such that $K\cong\varprojlim L_i$. Moreover, the projection maps  are surjective, and if $K$ is second-countable then we may assume $I=\N$.
\end{enumerate}
\end{fact} 
\begin{proof}
Part $(a)$. This is a straightforward consequence of the Open Mapping Theorem for compact groups \cite[p. 704]{HofMo3}. See also \cite[Theorem 7.12]{HeRobook} for a more general result in the locally compact setting.


Part $(b)$. This follows from the fact that a topological group is profinite if and only if it is compact and totally disconnected (see \cite[Theorem 2.1.3]{RZbook}). 

Part $(c)$. This follows from parts $(a)$ and $(b)$. 

Part $(d)$. See \cite[Theorem 4.5]{Ilt} (where torsion groups are called \emph{periodic}).

Part $(e)$. See \cite[Corollary A4.19]{HofMo3}. 

Part $(f)$. See \cite[Theorem 2.8]{HofMo3}.

Part $(g)$. See \cite[Proposition 2.42]{HofMo3}.

Part $(h)$. This is part of the well-known \emph{Peter-Weyl Theorems}. See \cite[Corollary 2.43]{HofMo3} for the first statement and surjectivity of the projection maps. The \emph{jsl} in question is the collection $I$ of kernels of continuous homomorphisms from $K$ to unitary groups under reverse inclusion (see \cite[Corollary 2.36]{HofMo3} and its proof). In particular, any open neighborhood of the identity in $K$ contains some kernel in $I$ (see \cite[Exercise E9.1]{HofMo3}), which yields the final claim on second-countability.
\end{proof}

We now return to model theory. Let $\cL$ be a first-order language. 
Given a sufficiently saturated $\cL$-structure $M^*$, a type-definable set $X\seq (M^*)^n$, and a complete $n$-type $p\in S_n(M^*)$, we say $p$ \textbf{concentrates} on $X$, written $p\models X$, if $p$ contains a bounded set $\{\phi_i(\xbar)\}_{i\in I}$ of formulas such that $X=\bigcap_{i\in I}\phi_i(M^*)$ (note that, by saturation and since $p$ is a complete type, we have $p\models X$ if and only if $\phi(\xbar)\in p$ for \emph{any} formula $\phi(\xbar)$ such that $X\seq \phi(M^*)$).

\begin{fact}\label{fact:QLT}
Let $G$ be a group definable in a sufficiently saturated $\cL$-structure $M^*$.
Suppose $\Gamma\leq G$ is a type-definable normal subgroup of bounded index, and let $\pi\colon G\to G/\Gamma$ be the canonical homomorphism.
\begin{enumerate}[$(a)$]
\item $G/\Gamma$ is a compact topological group, where a set $X\seq G/\Gamma$ is \textbf{closed} if and only if $\pi\inv(X)$ is type-definable.
\item If $X\seq G$ is definable then the set $\{a\Gamma\in G/\Gamma:a\Gamma\seq X\}$ is open in $G/\Gamma$.
\item $G/\Gamma$ is second-countable if and only if $\Gamma$ is countably-definable. 
\item Suppose $G=M^*$ and $\Gamma$ is $\theta$-type-definable for some invariant formula $\theta(x;\ybar)$. Then $X\seq G/\Gamma$ is closed if and only if $\pi\inv (X)$ is $\theta$-type-definable.
\item Suppose $\mu$ is a left-invariant finitely additive probability measure on the Boolean algebra $\cB$ of definable subsets of $G$, and $X$ is a closed subset of $G/\Gamma$.  Then $\eta_{G/\Gamma}(X)=\inf\{\mu(Z):\text{$Z\in\cB$ and $\pi\inv(X)\seq Z$}\}$. 
\end{enumerate}
\end{fact}
\begin{proof}
Part $(a)$. See \cite[Lemma 2.7]{PilCLG}.

Part $(b)$.  See \cite[Remark 2.4]{PilCLG}.

Part $(c)$. See \cite[Fact 1.3]{KrNe} for the right-to-left direction. Conversely, suppose $G/\Gamma$ is second-countable and let $\{U_n:n\in\N\}$ be a neighborhood basis of the identity in $G/\Gamma$. For any $n\in\N$, $\pi\inv(U_n)$ is co-type-definable and contains $\Gamma$. By saturation, there is some definable set $D_n\seq G$ such that $\Gamma\seq D_n\seq\pi\inv(U_n)$.  It follows that $\Gamma=\bigcap_{n=0}^\infty D_n$.

Part $(d)$. See \cite[Corollary 4.2]{CPpfNIP}.

Part $(e)$. Let $S_G(M^*)$ be the space of types concentrating on $G$. Recall that $S_G(M^*)$ is a Stone space, with a basis of clopen sets  $[Z]:=\{p\in S_G(M^*):Z\in p\}$ where $Z\in\cB$. Note that $G$ acts on $S_G(M^*)$ by left multiplication. Moreover, there is a unique left-invariant regular Borel probability measure $\widetilde{\mu}$ on $S_G(M^*)$ such that $\mu(Z)=\widetilde{\mu}([Z])$ for any $Z\in\cB$ (see, e.g., \cite[Section 7.1]{Sibook}). In particular, if $C\seq S_G(M^*)$ is closed then $\widetilde{\mu}(C)=\inf\{\mu(Z):Z\in\cB\text{ and }C\seq [Z]\}$. Now define the function $f\colon S_G(M^*)\to G/\Gamma$ where $f(p)$ is the unique coset $a\Gamma$ such that $p\models a\Gamma$. It is straightforward to show that $f\inv(X)=\{p\in S_G(M^*):p\models \pi\inv(X)\}$ for any closed $X\seq G/\Gamma$. It follows that $f$ is continuous and that  $\widetilde{\mu}(f\inv(X))=\inf\{\mu(Z):Z\in\cB\text{ and }\pi\inv(X)\seq Z\}$ for any closed $X\seq G/\Gamma$.  Finally,  one checks that $\widetilde{\mu}\circ f\inv$ is a left-invariant regular Borel probability measure on $G/\Gamma$, and thus must be $\eta_{G/\Gamma}$ by Fact \ref{fact:compactG}$(f)$. 
\end{proof}

The topology defined on $G/\Gamma$ in the previous fact is called the \emph{logic topology}, and the canonical  homomorphism from $G$ to $G/\Gamma$ connects topological structure in $G/\Gamma$ to definable sets in $G$. This is a special case of the following notion.

\begin{definition}
Suppose $M^*$ is a sufficiently saturated  $\cL$-structure, $X\seq M^*$ is definable, $Y$ is a compact space, and $f\colon X\to Y$ is a function. Then $f$ is \textbf{definable} if $f\inv(C)$ is type-definable for any closed $C\seq Y$. If $X$ is $\theta$-definable and each $f\inv(C)$ is $\theta$-type-definable, for some fixed formula $\theta(x;\ybar)$, then we say $f$ is \textbf{$\theta$-definable}.
\end{definition}

\begin{remark}\label{rem:defmap}
Suppose $f\colon X\to Y$ is definable (as above).
\begin{enumerate}[$(a)$]
\item Fix $C\seq U\seq Y$ with $C$ closed and $U$ open. Then, by saturation of $M^*$,  there is a definable set $D\seq X$ such that $f\inv(C)\seq D\seq f\inv(U)$. Thus if $C\seq Y$ is clopen then $f\inv(C)$ is definable. 
\item If $f(X)$ is finite then the fibers of $f$ partition  $X$ into finitely many type-definable sets which, by saturation, implies that each fiber of $f$ is definable. 
\end{enumerate}
Suppose further that $X$ and $f$ are $\theta$-definable for some fixed formula $\theta(x;\ybar)$. Then in part $(a)$ we may assume $D$ is $\theta$-definable, and so if $C\seq Y$ is clopen then $f\inv(C)$ is $\theta$-definable. Moreover, in part $(b)$, we have that each fiber of $f$ is $\theta$-definable.
\end{remark}

Recall that a first-order structure is \textbf{pseudofinite} if it is elementarily equivalent to an ultraproduct of finite structures. We now recall a result of the second author on definable compactifications of pseudofinite groups,  which will be the key ingredient that  allows us to introduce Bohr neighborhoods into the setting of possibly nonabelian finite groups.

\begin{theorem}\label{thm:comm}\textnormal{\cite{PiRCP}}
Suppose $G$ is a sufficiently saturated pseudofinite expansion of a group, and $\Gamma\leq G$ is normal and type-definable of bounded index. Then $(G/\Gamma)^0$ is abelian.
\end{theorem}
\begin{proof}
Suppose $M\prec G$ is a small substructure such that $\Gamma$ is type-definable over $M$. Then $G^{00}_M\leq \Gamma$, where $G^{00}_M$ denotes the intersection of \emph{all} bounded-index subgroups of $G$ that are type-definable over $M$. By \cite[Theorem 2.2]{PiRCP}, $(G/G^{00}_M)^0$ is abelian, and so $(G/\Gamma)^0$ is abelian by Fact \ref{fact:compactG}$(a)$ and since  the canonical homomorphism from $G/G^{00}_M$ to $G/\Gamma$ is surjective and continuous.
\end{proof}

\begin{remark}
The proof of \cite[Theorem 2.2]{PiRCP} uses the classification of approximate groups due to Breuillard, Green, and Tao \cite{BGT}. This theorem was later generalized by Nikolov, Schneider, and Thom \cite[Theorem 8]{NST}, who showed that the connected component of \emph{any} compactification of an (abstract) pseudofinite group is abelian. Their proof uses the classification of finite simple groups.
\end{remark}

We are now ready to collect all of the previous facts and prove the main result of this subsection. In particular, let $G$ be a group definable in a sufficiently saturated structure $M^*$, and let $\Gamma\leq G$ be type-definable and normal of bounded index. Since $G/\Gamma$ is a compact group, and the topology on $G/\Gamma$ is controlled by type-definable objects in $G$, we can analyze $\Gamma$ using the structure theory for compact Lie groups. The next lemma describes the main ingredients of this analysis. Given a poset $I$, we say that a net $(X_i)_{i\in I}$ of subsets of some fixed set $X$ is \textbf{decreasing} if $X_j\seq X_i$ for any $i,j\in I$ such that $i\leq j$.

\begin{lemma}\label{lem:Lie}
Let $G$ be a group definable in a sufficiently saturated $\cL$-structure $M^*$. Suppose $\Gamma\leq G$ is type-definable and normal of bounded index. Then there is a bounded jsl $I$, a decreasing net $(\Gamma_i)_{i\in I}$ of countably-definable bounded-index normal subgroups of $G$, and decreasing net $(H_i)_{i\in I}$ of definable finite-index normal subgroups of $G$, such that the following properties are satisfied.
\begin{enumerate}[$(i)$]
\item$\Gamma=\bigcap_{i\in I}\Gamma_i$, and $\Gamma_i\leq H_i$ for all $i\in I$.
\item For all $i\in I$, $G/\Gamma_i$ is a compact Lie group and $H_i/\Gamma_i=(G/\Gamma_i)^0$. 
\item $G/\Gamma\cong \varprojlim G/\Gamma_i$ and $(G/\Gamma)^0\cong\varprojlim H_i/\Gamma_i$.\footnote{Here we view $(G/\Gamma_i)_{i\in I}$ and $(H_i/\Gamma_i)_{i\in I}$ as projective systems using $(i)$ and the assumption that $(H_i)_{i\in I}$ and $(\Gamma_i)_{i\in I}$ are decreasing.}
\end{enumerate}
Moreover:
\begin{enumerate}[$(a)$]
\item If $\Gamma$ is countably-definable then we may assume $I=\N$.
\item If $G=M^*$ and $\Gamma$ is $\theta$-type-definable for some invariant formula $\theta(x;\ybar)$ then, for all $i\in I$, the quotient map $G\to G/\Gamma_i$ is $\theta$-definable, and so we may assume that $H_i$ is $\theta$-definable and  $\Gamma_i$ is $\theta$-type-definable.
\item If $G/\Gamma$ is profinite then we may assume $H_i=\Gamma_i$ for all $i\in I$.
\item If $G=M^*$ and $G$ is pseudofinite, then we may assume that for all $i\in I$, there is some $n_i\in\N$ such that $H_i/\Gamma_i\cong \T^{n_i}$.
\item If $G$ is abelian then we may assume that, for all $i\in I$, there is some $n_i\in\N$ and some finite group $F_i$ such that $G/\Gamma_i\cong \T^{n_i}\times F_i$ (and so $H_i/\Gamma_i\cong \T^{n_i}$). 
\end{enumerate} 
\end{lemma}
\begin{proof}
By Fact \ref{fact:compactG}$(h)$, there is a projective system $(L_i)_{i\in I}$ of compact Lie groups such that $G/\Gamma\cong\varprojlim L_i$ and the projection maps $f_i\colon G/\Gamma\to L_i$ are surjective. Since $\Gamma$ has bounded index in $G$, we may assume $I$ is bounded.
For $i\in I$, set $\Gamma_i=\ker(f_i\circ\pi)$, where $\pi\colon G\to G/\Gamma$ is the canonical homomorphism. Then each $\Gamma_i$ is a type-definable bounded-index normal subgroup of $G$, and $\Gamma=\bigcap_{i\in I}\Gamma_i$. Moreover, $G/\Gamma_i\cong L_i$ for all $i\in I$, and so $G/\Gamma\cong\varprojlim G/\Gamma_i$ with canonical (surjective) projection maps $\tau_i\colon G/\Gamma\to G/\Gamma_i$. Since each $G/\Gamma_i$ is a Lie group (and thus second-countable), each $\Gamma_i$ is countably-definable by Fact \ref{fact:QLT}$(c)$.  

Now, given $i\in I$, let $H_i\leq G$ be the pullback of $(G/\Gamma_i)^0$  under $G\to G/\Gamma_i$ (so $(G/\Gamma_i)^0=H_i/\Gamma_i$). Then for any $i\in I$, $H_i/\Gamma_i$ is a clopen finite-index normal subgroup of $G/\Gamma_i$.\footnote{If $K$ is a Lie group then it is locally connected, and so $K^0$ is clopen. Thus, if $K$ is also compact, then $K^0$ has finite index.} So each $H_i$ is a finite-index normal subgroup of $G$ and, since $G\to G/\Gamma_i$ is definable, $H_i$ is definable by Remark \ref{rem:defmap}$(a)$. If $i,j\in I$ and $i\leq j$, then the pullback of $H_i/\Gamma_i$ to $G/\Gamma_j$ is a clopen finite-index normal subgroup, and thus contains $H_j/\Gamma_j$, which implies $H_j\leq H_i$. Finally, $\tau_i((G/\Gamma)^0)=H_i/\Gamma_i$ for all $i\in I$ by Fact \ref{fact:compactG}$(a)$, and so $(G/\Gamma)^0\cong\varprojlim H_i/\Gamma_i$ by \cite[Corollary 1.1.8]{RZbook}. This finishes the proof of claims $(i)$ through $(iii)$

Now we deal with the remaining claims. Claim $(a)$ follows from Facts \ref{fact:compactG}$(h)$ and \ref{fact:QLT}$(c)$, and claim $(e)$ follows from Fact \ref{fact:compactG}$(g)$. For claim $(b)$, note that the quotient map $G\to G/\Gamma$ is $\theta$-definable by  Fact \ref{fact:QLT}$(d)$, and thus the remaining claims are evident from the above arguments. For claim $(c)$, if $G/\Gamma$ is profinite then each group $H_i/\Gamma_i$ is trivial by  Fact \ref{fact:compactG}$(b,c)$. Finally, for claim $(d)$, assume $G=M^*$ and suppose $G$ is pseudofinite. Then $(G/\Gamma)^0$ is abelian by Theorem \ref{thm:comm}. So, for any $i\in I$, $H_i/\Gamma_i$ is a compact connected abelian Lie group, and thus $H_i/\Gamma_i\cong \T^{n_i}$ for some $n_i\in\N$ by Fact \ref{fact:compactG}$(g)$. 
\end{proof}

\subsection{NIP formulas in pseudofinite groups} \label{sec:prePF}

In this subsection, we assume that $\cL$ expands the group language. Let $G$ be a sufficiently saturated expansion of a group.

\begin{definition}\label{def:NIP}
Let $\theta(x;\ybar)$ be a formula. Given $k\geq 1$, $\theta(x;\ybar)$ is \textbf{$k$-NIP} if there do not exist sequences $(a_i)_{i\in [k]}$ in $G$ and $(\bbar_I)_{I\seq[k]}$ in $G^{\ybar}$ such that $\theta(a_i,\bbar_I)$ holds if and only if $i\in I$. We say $\theta(x;\ybar)$ is \textbf{NIP} if it is $k$-NIP for some $k\geq 1$.
\end{definition}

\begin{remark}\label{rem:NIPVCform}
Given $k\geq 1$, a formula $\theta(x;\ybar)$ is $k$-NIP if and only if the set system $\{\theta(G;\bbar):\bbar\in G^{|\ybar|}\}$ on $G$ has VC-dimension at most $k-1$ (see, e.g., \cite[Section 6.1]{Sibook} for details on set systems and VC-dimension).
\end{remark}

Next, we summarize several main results from \cite{CPpfNIP}, which will form the basis for our work on NIP sets in finite and pseudofinite groups.  We say that a set $A\seq G$ is \textbf{generic} if $G$ is covered by finitely many left translates of $A$; and a formula $\phi(x)$ is \textbf{generic} if $\phi(G)$ is generic. 
Given a formula $\theta(x;\ybar)$,  let $S_{\theta}(G)$ denote the space of complete $\theta$-types over $G$ (i.e., ultrafilters over the Boolean algebra of $\theta$-formulas). A type $p\in S_{\theta}(G)$ is \textbf{generic} if every formula in $p$ is generic.

\begin{definition}\label{def:G00dr}
Given a formula $\theta(x;\ybar)$, we let $\theta^r(x;\ybar,u)$ denote the formula $\theta(x\cdot u;\ybar)$, and $G^{00}_{\theta^r}$ denote the intersection of all $\theta^r$-type-definable bounded-index subgroups of $G$.
\end{definition}

 Note that if $\theta(x;\ybar)$ is invariant, then so is $\theta^r(x;\ybar,u)$. One can also show that  if $\theta(x;\ybar)$ is invariant and NIP, and $\phi(x)$ is a $\theta^r$-formula, then $\phi(y \cdot x)$ is NIP.\footnote{However, $\theta^r(x;\ybar,u)$ itself need not be NIP (see \cite[Example 3.7]{CPpfNIP}).} We now focus on the case when $G$ is pseudofinite which, by saturation, implies that $G$ is an elementary extension of an ultraproduct $\prod_{\cU}G_i$, where each $G_i$ is a finite $\cL$-structure expanding a group and $\cU$ is a (nonprincipal) ultrafilter on some index set $I$.  In this case, $\mu$ will always denote the \textbf{pseudofinite counting measure} on $G$, which is obtained by lifting (e.g., as in \cite[Section 2]{HPP}) the ultralimit measure $\lim_{\cU}\mu_i$ on $\prod_{\cU}G_i$, where $\mu_i$ is the normalized counting measure on $G_i$.

 \begin{remark}\label{rem:Losmu}
  In several proofs, we will apply {\L}o\'{s}'s Theorem to properties of $\mu$, which requires an expanded language $\cL^+$ containing a sort for the ordered interval $[0,1]$ with a distance function, and functions from the $G$-sort to $[0,1]$ giving the measures of $\cL$-formulas. There are many accounts of this kind of formalism, and so we will omit further details and refer the reader to similar treatments in the literature, for example  \cite[Section 2.3]{CPpfNIP}, \cite[Section 2.6]{HruAG}, and \cite[Section 2]{HPP}.
  \end{remark}
  
  Before stating the next theorem, we ``localize" the notation given before Fact \ref{fact:QLT}. Given  a formula $\theta(x;\ybar)$, a $\theta$-type-definable set $X\seq G$, and a complete $\theta$-type $p\in S_\theta(G)$, we write $p\models X$ if $p$ contains a bounded set $\{\phi_i(x)\}_{i\in I}$ of $\theta$-formulas such that $X=\bigcap_{i\in I}\phi_i(G)$.

\begin{theorem}\label{thm:G00CP}\textnormal{\cite{CPpfNIP}}
Let $G$ be a sufficiently saturated pseudofinite expansion of a group, and suppose $\theta(x;\ybar)$ is an invariant NIP formula.
\begin{enumerate}[$(a)$]
\item A $\theta^r$-formula $\phi(x)$ is generic if and only if $\mu(\phi(x))>0$.
\item There are generic $\theta^r$-types in $S_{\theta^r}(G)$.
\item $G^{00}_{\theta^r}$ is $\theta^r$-countably-definable and normal of bounded index. 
\item Suppose $A\seq G$ is $\theta^r$-definable, and let $E\seq G/G^{00}_{\theta^r}$ be the set of $C\in G/G^{00}_{\theta^r}$ such that $p\models C\cap A$ and $q\models C\cap (G\backslash A)$
for some generic $p,q\in S_{\theta^r}(G)$. Then $E$ is closed  and $\eta_{G/G^{00}_{\theta^r}}(E)=0$. 
\end{enumerate}
\end{theorem}
\begin{proof}
See \cite[Corollary 2.15, Proposition 3.12$(a)$]{CPpfNIP} for part $(a)$.
Parts $(b)$, $(c)$, and $(d)$ are  Proposition 3.12$(c)$, Theorem 3.15$(a,e)$, and Theorem 5.2 in \cite{CPpfNIP}.
%
%
\end{proof}

\begin{remark}\label{rem:wide}
In the setting of the previous fact, part $(a)$ implies that if $X\seq G$ is $\theta^r$-type-definable, then $p\models X$ for some generic $p\in S_{\theta^r}(G)$ if and only if $\mu(W)>0$ for any definable (equivalently, any $\theta^r$-definable) set $W\seq G$ containing $X$. 
\end{remark}

We now prove the main result of this subsection. 

\begin{lemma}\label{lem:oneY}
Let $G$ be a sufficiently saturated pseudofinite expansion of a group. Suppose $\theta(x;\ybar)$ is an invariant NIP formula, and let $(W_i)_{i=0}^\infty$ be a decreasing sequence of definable sets such that $G^{00}_{\theta^r}=\bigcap_{i=0}^\infty W_i$. Fix a $\theta^r$-definable set $A\seq G$. Then, for any $\epsilon>0$, there is a $\theta^r$-definable set $Z\seq G$ and some $i\in\N$ such that $\mu(Z)<\epsilon$ and, for any $g\in G\backslash Z$, either $\mu(gW_i\cap A)=0$ or $\mu(gW_i\backslash A)=0$.
\end{lemma}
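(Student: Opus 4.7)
The plan is to leverage Fact \ref{fact:G00}(b) to isolate the bad cosets of $G^{00}_{\theta^r}$ via a small $\theta^r$-definable set, and then use saturation to replace the type-definable object $G^{00}_{\theta^r}$ with some $W_i$ from the given sequence. First, apply Fact \ref{fact:G00}(b) to the $\theta^r$-definable set $X$ to produce a $\theta^r$-definable set $Z\seq G$ with $\mu(Z)<\epsilon$ containing the preimage of $E_X$ under the quotient $G\to G/G^{00}_{\theta^r}$; this $Z$ will be the one asserted in the conclusion.

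Next I handle the pointwise step. Fix $g\in G\setminus Z$, so $gG^{00}_{\theta^r}\notin E_X$. By symmetry I may assume that there is no generic $\theta^r$-type $p\in S_{\theta^r}(G)$ extending both the partial type $gG^{00}_{\theta^r}$ (which is $\theta^r$-type-definable, using invariance of $\theta$) and the formula $X$. Since the set of generic $\theta^r$-types is closed in $S_{\theta^r}(G)$, a compactness argument in the type space, combined with Fact \ref{fact:fsg} (a $\theta^r$-formula is generic iff it has positive $\mu$-measure), produces a single $\theta^r$-formula $\psi(x)$ with $gG^{00}_{\theta^r}\seq\psi(G)$ and $\mu(X\cap\psi(G))=0$. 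Now $g\inv\psi(G)$ is a $\theta^r$-definable superset of $G^{00}_{\theta^r}=\bigcap_i W_i$, so $\cL$-saturation of $G$ together with the decreasing property of the $W_i$ gives $W_i\seq g\inv\psi(G)$ for some $i=i(g)$, whence $\mu(gW_i\cap X)=0$. Symmetrically, in the other case, $\mu(gW_i\setminus X)=0$ for some $i$.

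To uniformize the choice of $i$, set
\[
Y_i=\{g\in G:\mu(gW_i\cap X)>0\text{ and }\mu(gW_i\setminus X)>0\}.
\]
In the expanded language $\cL^+$ used to formalize the pseudofinite measure (cf.\ Section \ref{sec:prePF}), each $Y_i$ is $\cL^+$-definable, and $W_{i+1}\seq W_i$ implies that the sequence $(Y_i)$ is decreasing. The previous paragraph gives $\bigcap_i Y_i\seq Z$, so by $\cL^+$-saturation there is some $i$ with $Y_i\seq Z$. For this $i$ and the chosen $Z$, every $g\in G\setminus Z$ lies outside $Y_i$, which is exactly the required regularity conclusion.

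The chief obstacle is the compactness step in the second paragraph: one must correctly identify $gG^{00}_{\theta^r}$ as a $\theta^r$-partial type (using invariance of $\theta$), and use that the set of generic $\theta^r$-types is closed in $S_{\theta^r}(G)$, so that type-space compactness can collapse the infinitary data into the single formula $\psi$. The uniformization via $\cL^+$-saturation is then standard, given the measure-theoretic formalism already adopted in the paper.
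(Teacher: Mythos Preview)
Your proposal is correct (with one caveat below) but takes a genuinely different route from the paper in the uniformization step.

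Both arguments first establish the pointwise claim: for each $g$ with $gG^{00}_{\theta^r}\notin E_X$, there is some $i=i(g)$ with $\mu(gW_i\cap X)=0$ or $\mu(gW_i\setminus X)=0$. You do this via compactness in $S_{\theta^r}(G)$ together with closedness of the set of generic types; the paper argues by direct contradiction, building generic types from the filters $\{aW_i\cap X\}_i$ and $\{aW_i\setminus X\}_i$ of positive-measure sets. These are minor variants of the same idea.

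The real divergence is in making $i$ uniform. You define the bad sets $Y_i$ in the expanded language $\cL^+$ and invoke $\cL^+$-saturation to pass from $\bigcap_i Y_i\subseteq Z$ to $Y_i\subseteq Z$ for a single $i$. The paper instead works topologically in the compact group $K=G/G^{00}_{\theta^r}$: it thickens $Z$ to the open set $U=\{C\in K:C\subseteq Z\}\supseteq E_X$, assumes for contradiction that bad witnesses $a_i\notin\pi^{-1}(U)$ exist for every $i$, extracts a convergent subsequence $a_i\Gamma\to a\Gamma$ in the compact set $K\setminus U$, and then uses the nesting trick $W_{n}W_{n}\subseteq W_i$ (for $n$ large) to show that the limit point $a$ is still bad at every level~$i$, contradicting the pointwise claim.

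Your route is shorter and sidesteps the subsequence argument and the $W_nW_n\subseteq W_i$ step entirely. The trade-off is that it needs $G$ to be saturated as an $\cL^+$-structure, whereas the paper only takes $G$ saturated in $\cL$ and then lifts $\mu$. This is harmless---one may pass to a sufficiently saturated $\cL^+$-elementary extension at the outset without affecting any hypotheses---but you should say so explicitly. The paper's topological argument stays entirely within $\cL$-saturation plus compactness of $K$, which is why it does not need that adjustment.
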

\begin{proof}
To ease notation, let $\Gamma=G^{00}_{\theta^r}$ and $K=G/\Gamma$, and let $\pi\colon G\to K$ be the canonical homomorphism. Let $E$ be as in Theorem \ref{thm:G00CP}$(d)$ (with respect to the fixed $\theta^r$-definable set $A$). So $E$ is closed and $\eta_K(E)=0$. 

Fix $\epsilon>0$. By Fact \ref{fact:QLT}$(e)$, we may fix a definable set $Z'\seq G$ such that $\mu(Z')<\epsilon$ and $\pi\inv(E)\seq Z'$. By saturation and Fact \ref{fact:QLT}$(d)$, there a s $\theta^r$-definable set $Z\seq G$ such that $\pi\inv(E)\seq Z\seq Z'$. In particular, we still have $\mu(Z)<\epsilon$. 
We show that $Z$ satisfies the conclusion of the lemma. To motivate the argument, we first make a side remark. By saturation and Remark \ref{rem:wide}, we immediately have that for any $g\in G\backslash Z$, there is some $i\in\N$ such that $\mu(gW_i\cap A)=0$ or $\mu(gW_i\backslash A)=0$. Thus the content of the following argument is to show that we can pick one $i\in\N$ that works for every $g\in G\backslash Z$.

Toward a contradiction, suppose that for all $i\in\N$ there is some $a_i\in G\backslash Z$ such that $\mu(a_iW_i\cap A)>0$ and $\mu(a_iW_i\backslash A)>0$. Let $U=\{C\in K:C\seq Z\}$, which is open in $K$ by Fact \ref{fact:QLT}$(b)$. Note that  $E\seq U$ and $\pi\inv(U)\seq Z$. In particular,  $(a_i\Gamma)_{i=0}^\infty$ is an infinite sequence in $K\backslash U$. Since $U$ is open and $K$ is compact and second-countable (by Fact \ref{fact:QLT}$(c)$ and Theorem \ref{thm:G00CP}$(c)$), we may pass to a subsequence and assume that $(a_i\Gamma)_{i=0}^\infty$ converges to some $a\Gamma\in K\backslash U$. In particular, $a\Gamma\not\in E$. \medskip

\noindent\emph{Claim}: For all $i\in \N$, $\mu(aW_i\cap A)>0$ and $\mu(aW_i\backslash A)>0$.

\noindent\emph{Proof}: First, given $i\in \N$, let $U_i=\{C\in K:C\seq aW_i\}$. As above, each $U_i$ is open in $K$. Moreover, for any $i\in\N$, since $\Gamma\seq W_i$, we have $a\Gamma\in U_i\seq aW_i/\Gamma$. Also, since $(W_i)_{i=0}^\infty$ is decreasing and $\Gamma=\bigcap_{i=0}^\infty W_i$ is a subgroup of $G$, it follows from saturation that, for all $i\in\N$, there is $n_i\in\N$ such that $W_{n_i}W_{n_i}\seq W_i$. 

Now fix $i\in \N$. Since $U_{n_i}$ is an open neighborhood of $a\Gamma$,  there is $j\geq n_i$ such that $a_j\Gamma\in U_{n_i}$.  In particular, $a_j\in \pi\inv(U_{n_i})\seq aW_{n_i}$. Now we have $a_jW_j\seq a_jW_{n_i}\seq aW_{n_i}W_{n_i}\seq aW_i$. Since $\mu(a_jW_j\cap A)>0$ and $\mu(a_jW_j\backslash A)>0$, we have $\mu(aW_i\cap A)>0$ and $\mu(aW_i\backslash A)>0$.\claim\medskip

Now, since $(W_i)_{i=0}^\infty$ is decreasing and $\Gamma=\bigcap_{i=0}^\infty W_i$, it follows from saturation that any definable set containing $a\Gamma\cap A$ (respectively, $a\Gamma\backslash A$) contains $aW_i\cap A$ (respectively, $aW_i\backslash A$) for some $i\in\N$. So $a\Gamma\in E$ by the claim (and Remark \ref{rem:wide}), which is a contradiction. 
\end{proof}

Note that Lemma \ref{lem:oneY} provides a ``regularity" statement for invariant NIP formulas in pseudofinite groups. However, it does not provide any information about the shape of the definable sets $W_i$, other than that they approximate the subgroup $G^{00}_{\theta^r}$. Indeed, most of the remaining work in this paper is toward replacing these definable sets with ones that enjoy meaningful algebraic properties.

\begin{remark}
Except for Sections \ref{sec:distal} and \ref{sec:padic}, our use of  finite VC-dimension is based entirely on applications of Lemma \ref{lem:oneY}. So it is worth emphasizing that the proof of Theorem \ref{thm:G00CP} in \cite{CPpfNIP} heavily uses fundamental results about VC-dimension such as the Sauer-Shelah Lemma and  the VC-Theorem (see \cite[Chapter 6]{Sibook}), as well as work of Simon \cite{SimRC, SimGCD} on NIP formulas, which builds on a large body of research on NIP theories and groups definable in such theories (e.g., \cite{ChSi, HPP, HP}).  
\end{remark}

\subsection{NIP subsets of groups}\label{sec:NIPsetsG}

In this subsection, we briefly recall the notion of an NIP subset of an arbitrary group, and clarify the relationship to VC-dimension and NIP formulas. Recall from the introduction that if $G$ is a group and $A\seq G$, then we have the bipartite graph $\Gamma_G(A)=(G,G;E)$ where $E=\{(x,y)\in G^2:yx\in A\}$. 

\begin{definition}
Given a group $G$ and an integer $k\geq 1$, we say that $A$ is \textbf{$k$-NIP} (in $G$) if $\Gamma_G(A)$ is $k$-NIP, i.e., it omits the bipartite graph $([k],\cP([k]);\in)$ as an induced subgraph (where $[k]=\{1,\ldots,k\}$).
\end{definition}

\begin{remark}
Fix a group $G$ and a subset $A\seq G$.
\begin{enumerate}
\item If $\Gamma_G(A)$ omits \emph{some} finite bipartite graph $(V,W,E)$ as an induced subgraph, then $A$ is $k$-NIP for some $k\leq |V|+\lceil\log_2|W|\rceil$. 
\item Given $k\geq 1$, $A$ is $k$-NIP if and only if the formula $\theta(x;y):=A(y\cdot x)$ is $k$-NIP (here we view $G$ as a structure in the group language expanded by a predicate for $A$). 
\item Given $k\geq 1$, $A$ is $k$-NIP if and only if the set system $\{gA:g\in G\}$ of left translates of $A$ has VC-dimension at most $k-1$ (see Remark \ref{rem:NIPVCform}).
\end{enumerate}
\end{remark}

Note that the formula $\theta(x;y)$ defined in the second remark is invariant. This is the main reason we define $\Gamma_G(A)$ in terms of the edge relation $yx\in A$ and not $xy\in A$. However, an important fact is that if  a bipartite graph $(V,W;E)$ is $k$-NIP then the ``opposite graph" $(W,V;\{(w,v):E(v,w)\})$ is $2^k$-NIP (see \cite[Lemma 6.3]{Sibook}).\footnote{On the other hand, if a bipartite graph is $k$-stable (as defined in the introduction) then one can easily check that the opposite graph is as well. This reconciles our definitions with those in \cite{CPT}, where we defined $k$-stable subsets of groups using the relation $xy\in A$.} So the order of the group operation when defining $k$-NIP sets only affects the precise value of $k$, and not whether the set is NIP overall.

\section{Structure and regularity: the profinite case}\label{sec:prof}
In this section, we prove a structure and regularity theorem for $\theta^r$-definable sets in a sufficiently saturated pseudofinite group $G$, where $\theta(x;\ybar)$ is an invariant NIP formula and $G/G^{00}_{\theta^r}$ is profinite (Theorem \ref{thm:UPprof} below). As an application, we obtain a structure and regularity theorem for NIP sets in finite groups of uniformly bounded exponent. We also view Theorem \ref{thm:UPprof} as a warm-up to the general case. Indeed, this theorem follows almost immediately from Lemma \ref{lem:oneY} and the fact that profinite quotients correspond to type-definable subgroups that are intersections of definable subgroups (via Lemma \ref{lem:Lie}$(c)$).

\begin{theorem}\label{thm:UPprof}
Let $G$ be a sufficiently saturated pseudofinite expansion of a group, and suppose $\theta(x;\ybar)$ is an invariant NIP formula. Assume $G/G^{00}_{\theta^r}$ is profinite. Fix a $\theta^r$-definable set $A\seq G$ and some $\epsilon>0$. Then there are
\begin{enumerate}[\hspace{5pt}$\ast$]
\item a $\theta^r$-definable finite-index normal subgroup $H\leq G$, and 
\item a set $Z\seq G$, which is a union of cosets of $H$ with $\mu(Z)<\epsilon$,
\end{enumerate}
satisfying the following properties. 
\begin{enumerate}[$(i)$]
\item \textnormal{(structure)} There is a set $D\seq G$, which is a union of cosets of $H$, such that 
\[
\mu((A\backslash Z)\smd D)=0.
\]
\item \textnormal{(regularity)} For any $g\in G\backslash Z$, either $\mu(gH\cap A)=0$ or $\mu(gH\backslash A)=0$.
\end{enumerate} 
\end{theorem}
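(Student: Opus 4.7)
The plan is to invoke Lemma \ref{lem:oneY} with a chain of $\theta^r$-definable finite-index normal approximations to $G^{00}_{\theta^r}$, and then upgrade its conclusion to a statement phrased in terms of cosets of a single subgroup $H$. For the chain: since $G/G^{00}_{\theta^r}$ is profinite, Corollary \ref{cor:G00=G0} supplies a countable family $\{K_i : i\in\N\}$ of $\theta^r$-definable finite-index subgroups with $\bigcap_i K_i = G^{00}_{\theta^r}$. To make these normal, I would replace each $K_i$ by $K_i' := \bigcap_{g\in G} gK_ig\inv$; this is a \emph{finite} intersection because $[G:K_i]<\infty$, each conjugate $gK_ig\inv$ is $\theta^r$-definable by invariance of $\theta$ (rewriting $\theta(g\inv x g\cdot c;\bbar)$ as an instance of $\theta^r$ in $x$), and $K_i'$ still contains $G^{00}_{\theta^r}$ by normality of the latter. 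Taking successive intersections $H_i := K_0'\cap\cdots\cap K_i'$, I obtain a decreasing sequence of $\theta^r$-definable, finite-index, normal subgroups of $G$ with $\bigcap_i H_i = G^{00}_{\theta^r}$.

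Next, apply Lemma \ref{lem:oneY} to $X$ and $\epsilon$, using the $W_i = H_i$. This yields a $\theta^r$-definable set $Z_0$ with $\mu(Z_0)<\epsilon$ and an index $i$ such that for every $g\notin Z_0$, either $\mu(gH_i\cap X)=0$ or $\mu(gH_i\backslash X)=0$. Set $H := H_i$.

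To promote $Z_0$ into a union of $H$-cosets, define $Z$ to be the union of the ``bad'' cosets $gH$, i.e., those with $\mu(gH\cap X)>0$ and $\mu(gH\backslash X)>0$. Since bad-ness depends only on the coset $gH$, a bad coset cannot contain any element outside $Z_0$; thus each bad coset is contained in $Z_0$, so $Z\seq Z_0$ and $\mu(Z)<\epsilon$. Regularity on $G\backslash Z$ is then immediate. For the structure clause, define $D$ to be the union of those cosets $gH$ with $g\notin Z$ and $\mu(gH\backslash X)=0$; on each good coset, $((X\backslash Z)\smd D)\cap gH$ equals either $gH\cap X$ or $gH\backslash X$, whichever is null, and since there are only finitely many cosets of $H$ involved, we conclude $\mu((X\backslash Z)\smd D)=0$.

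I expect no serious obstacle, as all the analytic content is already packaged into Lemma \ref{lem:oneY}. The only subtlety is the normalization step, which requires checking that conjugates of a $\theta^r$-definable set remain $\theta^r$-definable; this falls out cleanly from the invariance hypothesis on $\theta(x;\ybar)$.
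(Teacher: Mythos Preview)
Your proof is correct and follows essentially the same route as the paper: invoke Corollary~\ref{cor:G00=G0} to write $G^{00}_{\theta^r}$ as a decreasing intersection of $\theta^r$-definable finite-index subgroups, apply Lemma~\ref{lem:oneY}, shrink $Z$ to a union of cosets, and read off $D$. The only cosmetic differences are that the paper takes the subgroups $H_t$ already normal (they come from Lemma~\ref{lem:Lie}, whose $H_t$ are normal, though the statement of Corollary~\ref{cor:G00=G0} does not say so explicitly), so your normalization step is not needed but also does no harm; and the paper shrinks $Z$ to $\{a:aH\subseteq Z_0\}$ rather than to the union of bad cosets, which is the same idea and possibly slightly larger than your choice.
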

\begin{proof}
By Theorem \ref{thm:G00CP}$(c)$ and  Lemma \ref{lem:Lie}$(a,c)$, there is a decreasing sequence $(H_i)_{i=0}^\infty$ of $\theta^r$-definable finite-index normal subgroups of $G$ such that $G^{00}_{\theta^r}=\bigcap_{i=0}^\infty H_i$. By Lemma \ref{lem:oneY}, there is a $\theta^r$-definable set $Z'\seq G$ and some $i\in\N$ such that $\mu(Z')<\epsilon$ and, if $H:=H_i$, then for any $g\in G\backslash Z'$, either $\mu(gH\cap A)=0$ or $\mu(gH\backslash A)=0$. Let $Z=\{g\in G:gH\seq Z'\}$. Then $Z$ is a union of cosets of $H$, and thus is $\theta^r$-definable since  $[G:H]$ is finite. Note that $\mu(Z)<\epsilon$ since $Z\seq Z'$. Moreover, if $g\in G\backslash Z$ then $gH=g'H$ for some $g'\in G\backslash Z'$, and so we have condition $(ii)$.  Now let $D=\bigcup\{gH:g\in G\backslash Z\text{ and }\mu(gH\cap A)>0\}$. Then $D$ is $\theta^r$-definable since $[G:H]$ is finite. Moreover, $\mu((A\backslash Z)\smd D)=0$ by condition $(ii)$ and since $[G:H]$ is finite. So we have condition $(i)$.
\end{proof}

We now prove structure and regularity for NIP sets in finite groups of \emph{uniformly bounded exponent}. In this case, we obtain the optimal situation where NIP sets are entirely controlled by finite-index subgroups up to small error.  This is related to a similar result of Alon, Fox, and Zhao \cite[Theorem 1.1]{AFZ} on finite \emph{abelian} groups of bounded exponent. Our result is stronger in the sense that the abelian assumption is removed and the structural conclusions are improved, but also weaker in the sense that we do not obtain explicit bounds. This is analogous to the comparison of our stable arithmetic regularity lemma in \cite{CPT} (Theorem \ref{thm:CPT1} above)  to the work of the third author and Wolf \cite{TeWo} on stable sets in $\F_p^n$.

\begin{theorem}\label{thm:regexp}
For any $k,r\geq 1$ and $\epsilon>0$, there is $n=n(k,r,\epsilon)$ such that the following holds. Suppose $G$ is a finite group of exponent $r$ and $A\seq G$ is $k$-NIP. Then there are
\begin{enumerate}[\hspace{5pt}$\ast$]
\item  a normal subgroup $H\leq G$ of index at most $n$, and
\item   a set $Z\seq G$, which is a union of cosets of $H$ with $|Z|<\epsilon|G|$,
\end{enumerate}
satisfying the following properties.
\begin{enumerate}[$(i)$]
\item \textnormal{(structure)} There is a set $D\seq G$, which is a union of cosets of $H$, such that
\[
|(A\backslash Z)\smd D|<\epsilon|H|.
\]
\item \textnormal{(regularity)} For any $g\in G\backslash Z$, either $|gH\cap A|<\epsilon|H|$ or $|gH\backslash A|<\epsilon|H|$.
\end{enumerate}
Moreover, $H$ is in the Boolean algebra generated by $\{gAh:g,h\in G\}$.
\end{theorem}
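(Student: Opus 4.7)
The plan is to derive Theorem \ref{thm:regexp} from its pseudofinite companion Theorem \ref{thm:UPprof} by the standard ultraproduct/compactness transfer already advertised in the introduction, combined with the fact that a compact Hausdorff group of finite exponent is profinite.

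Assume toward contradiction that the theorem fails for some fixed $k,r\geq 1$ and $\epsilon>0$. Then for every $n\in\N$ we can choose a finite group $G_n$ of exponent $r$ and a $k$-NIP set $A_n\seq G_n$ for which no normal subgroup $H\leq G_n$ of index at most $n$, lying in the Boolean algebra generated by $\{gA_nh:g,h\in G_n\}$, and no union $Z$ of cosets of $H$ with $|Z|<\epsilon|G_n|$, witnesses conditions $(i)$ and $(ii)$. Expand $\cL$ by a unary predicate interpreted as $A_n$ in $G_n$, form the ultraproduct $G^*=\prod_{\cU}G_n$ along a non-principal ultrafilter $\cU$, and pass to a sufficiently saturated elementary extension $G\succeq G^*$. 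Then $G$ is pseudofinite, has exponent $r$ (a first-order axiom), and the invariant formula $\theta(x;y):=x\in y\cdot A$ is $k$-NIP since $k$-NIP is a first-order schema holding in each $G_n$.

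Next I verify the hypothesis of Theorem \ref{thm:UPprof}. Since $G$ has exponent $r$, so does the compact Hausdorff quotient $G/G^{00}_{\theta^r}$. By the theorem of Iltis cited in the introduction as \cite{Ilt}, a compact Hausdorff group of finite exponent is profinite, so $G/G^{00}_{\theta^r}$ is profinite. Applying Theorem \ref{thm:UPprof} to the $\theta^r$-definable set $X=A$ with the given $\epsilon$ produces a $\theta^r$-definable finite-index normal subgroup $H^*\leq G$ (say of index $m$), a $\theta^r$-definable union $Z^*$ of cosets of $H^*$ with $\mu(Z^*)<\epsilon$, a union $D^*$ of cosets of $H^*$ with $\mu((A\backslash Z^*)\smd D^*)=0$, and the regularity clause that for every $g\in G\backslash Z^*$ either $\mu(gH^*\cap A)=0$ or $\mu(gH^*\backslash A)=0$. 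Because each instance $\theta^r(x;y,u)$ defines the set $\{x:xu\in yA\}=yAu\inv$, the $\theta^r$-definability of $H^*$ realizes it as a Boolean combination of translates $gAh$, which secures the ``moreover'' clause.

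The transfer back is then routine {\L}o\'{s}. All the relevant properties of $H^*,Z^*,D^*$ —  being a normal subgroup of index exactly $m$, being a fixed-shape Boolean combination of translates of $A$, being a union of cosets of $H^*$ — are first-order in the expanded language, so for $\cU$-almost every $n$ there are sets $H_n,Z_n,D_n$ in $G_n$ given by the same formulas over $G_n$-parameters and satisfying the analogous properties. Since the index $m$ is fixed once $H^*$ has been produced, $|H_n|/|G_n|=1/m$, so $\mu(Z^*)<\epsilon$ gives $|Z_n|<\epsilon|G_n|$ for $\cU$-almost all $n$, and the measure-zero statements on the at most $m$ cosets of $H^*$ outside $Z^*$ translate to the required bounds of the form $<\epsilon|H_n|$; the same reasoning handles $|(A_n\backslash Z_n)\smd D_n|<\epsilon|H_n|$. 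For any such $n\geq m$ this contradicts the choice of $(G_n,A_n)$ as a counterexample. All the genuine mathematical work lies upstream in Theorem \ref{thm:UPprof} and in generic compact domination; the only mild points here are the bookkeeping that turns measure-zero conclusions into the finitary $\epsilon$-bounds via the fixed index $m$, and confirming that the $\theta^r$-definability explicit in Theorem \ref{thm:UPprof} gives the Boolean-algebra-of-translates conclusion stated in Theorem \ref{thm:regexp}.
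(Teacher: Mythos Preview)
Your proposal is correct and follows essentially the same route as the paper: assume failure, take an ultraproduct of counterexamples, note that exponent $r$ forces $G/G^{00}_{\theta^r}$ to be profinite via \cite{Ilt}, apply Theorem \ref{thm:UPprof}, and transfer down by {\L}o\'{s} using that the index $m$ is fixed so that measure-zero becomes $<\epsilon|H_n|$. The only cosmetic difference is that the paper first observes $(i)\Rightarrow(ii)$ and thus only needs to contradict $(i)$, whereas you transfer both clauses directly; either way the argument goes through.
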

\begin{proof}
Note that condition $(ii)$ follows immediately from condition $(i)$. So suppose condition $(i)$ is false. Then we have some fixed $k,r\geq 1$ and $\epsilon>0$ such that, for all $i\in\N$, there is a finite group $G_i$ of exponent $r$, which is a counterexample. Specifically, there is a $k$-NIP subset $A_i\seq G_i$ such that, if $H\leq G_i$ is normal with index at most $i$, and $D,Z\seq G_i$ are unions of cosets of $H$ with $|Z|<\epsilon|G_i|$, then $|(A_i\backslash Z)\smd D|>\epsilon|H|$. 

Let $\cL$ be the group language with a new predicate $A$, and consider $(G_i,A_i)$ as a finite $\cL$-structure. Let $\cU$ be a nonprincipal ultrafilter on $\Z^+$, and let $G$ be a sufficiently saturated elementary extension of $M:=\prod_{\cU}(G_i,A_i)$. Let $\theta(x;y)$ be the formula $A(y\cdot x)$. Note that $\theta(x;y)$ is invariant. Moreover, by {\L}o\'{s}'s Theorem and since $M\prec G$, $\theta(x;y)$ is $k$-NIP (in $G$) and $G$ has exponent $r$. So $G/G^{00}_{\theta^r}$ is a compact  torsion group, and thus is profinite by Fact \ref{fact:compactG}$(d)$.  By Theorem \ref{thm:UPprof}, there is a $\theta^r$-definable finite-index normal subgroup $H\leq G$ and  sets $D,Z\seq G$, which are unions of cosets of $H$, such that $\mu(Z)<\epsilon$ and $\mu((A\backslash Z)\smd D)=0$.

Let $n=[G:H]$, and fix $\langle\theta^r\rangle$-formulas $\phi(x;\ybar)$, $\psi(x;\zbar)$, and $\zeta(x;\ubar)$ such that $H$, $D$, and $Z$ are defined by instances of $\phi(x;\ybar)$, $\psi(x;\zbar)$, and $\zeta(x;\ubar)$, respectively. Given $i\in\Z^+$, let $\mu_i$ be the normalized counting measure on $G_i$. Let $I$ be the set of $i\in\Z^+$ such that, for some tuples $\abar_i$, $\bbar_i$, and $\cbar_i$ from $G_i$,
\begin{enumerate}[$(i)$]
\item $\phi(x;\abar_i)$ defines a normal subgroup $H_i$ of $G_i$ of index $n$,
\item $\psi(x;\bbar_i)$ and $\zeta(x;\cbar_i)$ define sets $D_i,Z_i\seq G_i$, respectively, which are each unions of cosets of $H_i$, and
\item $\mu_i(Z_i)<\epsilon$ and $\mu_i((A_i\backslash Z_i)\smd D_i)<\frac{\epsilon}{n}$.
\end{enumerate}
Then $I\in\cU$ by {\L}o\'{s}'s Theorem and since $M\prec G$. So there is some $i\in I$ such that $i\geq n$, which contradicts the choice of $(G_i,A_i)$. 
\end{proof}

The statement of the previous result is almost identical to our result from \cite{CPT} on \emph{stable} arithmetic regularity in arbitrary finite groups (Theorem \ref{thm:CPT1} above), except for the presence of the error set $Z$. As in  \cite[Corollary 3.5]{CPT},  we can use this result to deduce a very strong \emph{graph regularity} statement for bipartite graphs defined by NIP subsets of finite groups of uniformly bounded exponent. 

Let $\Gamma=(V,W;E)$ be a finite bipartite graph and fix nonempty sets $X\seq V$ and $Y\seq W$. The \textbf{edge density} of the pair $(X,Y)$ is $\delta_\Gamma(X,Y):=|(X\times Y)\cap E|/|X\times Y|$. We say that $(X,Y)$ is \textbf{$\epsilon$-regular} if $|\delta_\Gamma(X,Y)-\delta_\Gamma(X_0,Y_0)|\leq\epsilon$ for any $X_0\seq X$ and $Y_0\seq Y$ such that $|X_0|\geq\epsilon|X|$ and $|Y_0|\geq\epsilon|Y|$. Given vertices $v\in V$ and $w\in W$, define $\deg_\Gamma(v,Y)=|\{y\in Y:E(v,y)\}|$ and $\deg_\Gamma(X,w)=|\{x\in X:E(x,w)\}|$.
Following \cite{CPT}, we say that the pair $(X,Y)$ is \textbf{uniformly $\epsilon$-good for $\Gamma$}, where $\epsilon>0$, if $|X|=|Y|$ and either:
\begin{enumerate}[$(i)$]
\item for any $x\in X$ and $y\in Y$, $\deg_\Gamma(x,Y)=\deg_\Gamma(X,y)\leq\epsilon |X|$, or
\item for any $x\in X$ and $y\in Y$, $\deg_\Gamma(x,Y)=\deg_\Gamma(X,y)\geq (1-\epsilon)|X|$.
\end{enumerate}
One can show that if $(X,Y)$ is uniformly $\epsilon^2$-good then it is $\epsilon$-regular, and either $\delta_\Gamma(X,Y)\leq\epsilon$ or $\delta_\Gamma(X,Y)\geq 1-\epsilon$. In fact, a stronger property holds: if $X_0\seq X$ and $Y_0\seq Y$ are nonempty and \emph{either $|X_0|\geq\epsilon|X|$ or $|Y_0|\geq\epsilon|Y|$}, then $\delta_\Gamma(X_0,Y_0)\leq\epsilon$ or $\delta_\Gamma(X_0,Y_0)\geq 1-\epsilon$ (see \cite[Proposition 3.4]{CPT}).

Now suppose $G$ is a finite group and $A$ is a subset of $G$. Let $\Gamma_G(A)=(G,G;E)$ where $E=\{(x,y)\in G^2:yx\in A\}$.  Given $X\seq G$ and $g\in G$, note that $\deg_{\Gamma_G(A)}(g,X)=|A\cap Xg|$ and $\deg_{\Gamma_G(A)}(X,g)=|A\cap gX|$.   We now observe that Theorem \ref{thm:regexp} implies a graph regularity statement for NIP subsets of finite groups of uniformly bounded exponent,  in which the partition is given by cosets of a normal subgroup and almost all pairs are uniformly good (and thus regular up to a change in $\epsilon$).  Given a group $G$, a normal subgroup $H\leq G$, and $C,D\in G/H$, let $C\cdot D$ denote the product of $C$ and $D$ in the quotient group $G/H$.

\begin{corollary}\label{cor:regexp}
For any $k,r\geq 1$ and $\epsilon>0$ there is $n(k,r,\epsilon)$ such that the following holds. Suppose $G$ is a finite group of exponent $r$ and $A\seq G$ is $k$-NIP. Then there is a  normal subgroup $H$ of index $n\leq n(k,r,\epsilon)$, and set $\Sigma\seq(G/H)^2$, with $|\Sigma|\leq\epsilon n^2$, such that any $(C,D)\not\in\Sigma$ is uniformly $\epsilon$-good for $\Gamma_G(A)$. 
\end{corollary}
\begin{proof}
Fix $k,r\geq 1$ and $\epsilon>0$ and let $n(k,r,\epsilon)$ be as in Theorem \ref{thm:regexp}. Fix a finite group $G$ and a $k$-NIP set $A\seq G$. Then there is a normal subgroup $H\leq G$ of index $n\leq n(k,r,\epsilon)$, and a set $\cI\seq G/H$ with $|\cI|\leq\epsilon n$, such that for any $C\not\in \cI$, either $|C\cap A|\leq\epsilon|H|$ or $|C\cap A|\geq (1-\epsilon)|H|$. Let $\Sigma=\{(C,D)\in (G/H)^2:C\cdot D\in \cI\}$. Then $\Sigma=\bigcup_{C\in G/H}\{(C,C\inv \cdot D):D\in\cI\}$, and so $|\Sigma|\leq\epsilon n^2$. Finally, if $(C,D)\not\in\Sigma$, then $(C,D)$  is uniformly $\epsilon$-good for $\Gamma_G(A)$ (this is identical to the calculation in the proof of \cite[Corollary 3.5]{CPT}, and makes crucial use of normality of $H$).   
\end{proof}

\begin{remark}\label{rem:profstab}
In Theorem \ref{thm:regexp}, the assumption of uniformly bounded exponent was used to obtain a certain profinite quotient, and so it is worth reviewing this argument from a more general perspective. Specifically, fix $k\geq 1$ and consider the following property of a class $\cG$ of finite groups: $(\ast)_k$ For any sequences $(G_i)_{i=0}^\infty$ and $(A_i)_{i=0}^\infty$, where $G_i\in\cG$ and $A_i\seq G_i$ is $k$-NIP, and for any ultrafilter $\cU$ on $\N$, if $G$ is a sufficiently saturated elementary extension of $\prod_{\cU}(G_i,A_i)$, then $G/G^{00}_{\theta^r}$ is profinite, where $\theta(x;y):=A(y\cdot x)$.  Then, for any $\cG$ satisfying $(\ast)_k$ and any $\epsilon>0$, there is some $n=n(k,\epsilon,\cG)$ such that any group $G\in\cG$ and $k$-NIP set $A\seq G$ satisfy the conclusions of Theorem \ref{thm:regexp} using $n$. Indeed, Theorem \ref{thm:regexp} only uses that for any $k,r\geq 1$, the class $\cG_r$ of finite groups of exponent $r$ satisfies $(\ast)_k$, for the rather heavy-handed reason that compact  torsion groups are profinite.  

Profinite quotients also arise in the stable setting (recall that $A\seq G$ is \textbf{$k$-stable} if $\Gamma_G(A)$ omits $([k],[k];\leq)$ as an induced subgraph).  In fact, if $G$ is pseudofinite and saturated, and $\theta(x;\ybar)$ is a stable invariant formula, then the group $G/G^{00}_{\theta^r}$ is  \emph{finite} (see \cite[Corollary 3.17]{CPpfNIP}). Therefore, in this case,  the set $E$ in Theorem \ref{thm:G00CP}$(d)$ 
 is empty since it is a Haar null set in a finite group. So, if one replaces ``NIP" with ``stable" in Lemma \ref{lem:oneY} and Theorem \ref{thm:UPprof}, then one can choose the error set $Z=\emptyset$, and  a similar ultraproduct argument as in Theorem \ref{thm:regexp} yields Theorem \ref{thm:CPT1} (structure and regularity for stable subsets of finite groups).\footnote{It is worth noting that this explanation of Theorem \ref{thm:CPT1} is not a faster proof than what is done in \cite{CPT}. In particular,  \cite[Corollary 3.17]{CPpfNIP} relies on the same results from \cite{HrPiGLF} used in \cite{CPT} to directly prove Theorem \ref{thm:CPT1}. Also, pseudofiniteness is not needed to prove that $G/G^{00}_{\theta^r}$ is finite.} 
\end{remark}

\section{Bohr neighborhoods}\label{sec:Bohr}

In this section, we recall some basic definitions and facts concerning Bohr neighborhoods, and define an approximate version of Bohr neighborhoods, which we will need for later arguments involving ultraproducts.

Given a group $G$, $1_G$ denotes the identity (if $G$ is abelian we use $0_G$). 
We say that the pair $(L,d)$ is a \textbf{compact metric group} if $L$ is a compact metrizable  group and $d$ is a \emph{bi-invariant} metric on $L$ compatible with the topology. By Fact \ref{fact:compactG}$(e)$, if $L$ is \emph{any} compact second-countable group then there is a (not necessarily unique) bi-invariant metric $d$ on $L$ such that $(L,d)$ is a compact metric group.

\begin{definition}
Let $H$ be a group and let $(L,d)$ be a compact metric group. Given some  $\delta>0$ and a homomorphism $\tau\colon H\to L$, define
\[
B^L_{\tau,\delta}=\{x\in H:d(\tau(x),1_L)<\delta\}.
\] 
A set $B\seq H$ is a \textbf{$(\delta,L)$-Bohr neighborhood in $H$} if $B=B^L_{\tau,\delta}$ for some homomorphism $\tau\colon H\to L$.
\end{definition}

Our ultimate goal is to transfer Bohr neighborhoods in pseudofinite groups  to Bohr neighborhoods in finite groups. To do this, we will need to approximate Bohr neighborhoods  by definable objects. This necessitates an approximate notion of a Bohr neighborhood, which involves approximate homomorphisms of groups.

\begin{definition}\label{def:approxBohr}
Let $H$ be a group and let $(L,d)$ be a compact metric group.
\begin{enumerate}
\item Given $\delta>0$, a function $f\colon H\to L$ is a \textbf{$\delta$-homomorphism} if $f(1_H)=1_L$ and, for all $x,y\in H$, $d(f(xy),f(x)f(y))<\delta$. 
\item Given $\epsilon,\delta>0$, a set $Y\seq H$ is a \textbf{$\delta$-approximate $(\epsilon,L)$-Bohr neighborhood in $H$} if $Y=\{x\in H:d(f(x),1_L)<\epsilon\}$ for some $\delta$-homomorphism $f\colon H\to L$.
\end{enumerate}
\end{definition}

Approximate homomorphisms have been studied extensively in the literature, with a special focus on the question of when an approximate homomorphism is ``close" to an actual homomorphism. For our purposes, this is what is needed to replace approximate Bohr neighborhoods with actual Bohr neighborhoods. More precisely, we will start with a definable approximate Bohr neighborhood in a pseudofinite group, and transfer this to find an approximate Bohr neighborhood in a finite group. At this point, we will be working with an approximate homomorphism from a finite group to a compact Lie group, which is a setting  where one can always find a genuine Bohr neighborhood inside an approximate Bohr neighborhood, with a negligible loss in size.

\begin{theorem}[Alekseev, Glebski\u{\i}, \& Gordon \textnormal{\cite[Theorem 5.13]{AlGlGo}}]\label{thm:AYG}
Let $(L,d)$ be a compact metric Lie group. Then there is an $\alpha_L>0$ such that, for any $0<\delta<\alpha_L$, if $H$ is a compact group and $f\colon H\to L$ is a $\delta$-homomorphism, then there is a homomorphism $\tau\colon H\to L$ such that $d(f(x),\tau(x))< 2\delta$ for all $x\in H$.
\end{theorem}

An easy consequence is that in the setting of compact Lie groups, Bohr neighborhoods can be found inside approximate Bohr neighborhoods. Given a compact metric group $(L,d)$ and $n\geq 1$, note that we have a compact metric group $(L^n,d^n)$,  where $L^n$ is endowed the product topology and $d^n(\xbar,\ybar)=\max_{1\leq i\leq n}d(x_i,y_i)$.

\begin{corollary}\label{cor:findBohr}
Let $(L,d)$ be a compact metric Lie group. Then there is an $\alpha_L>0$ such that, if $H$ is a compact group, $n\in\N$, and $0<\delta<\alpha_L$, then any $\delta$-approximate $(3\delta,L^n)$-Bohr neighborhood in $H$ contains a $(\delta,L^n)$-Bohr neighborhood in $H$.
\end{corollary}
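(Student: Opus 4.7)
The plan is to choose $\alpha_L$ to be precisely the constant from Fact \ref{fact:AYG} applied to $L$ itself (not to $L^n$), and then exploit the sup-metric on $L^n$ to transfer the result coordinatewise. The whole proof will be a routine triangle-inequality argument once this reduction is in place.

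First I would unpack the hypothesis. Suppose $Y\seq H$ is a $\delta$-approximate $(3\delta,L^n)$-Bohr neighborhood, witnessed by a $\delta$-homomorphism $f\colon H\to L^n$, so that $Y=\{x\in H:d_{L^n}(f(x),1_{L^n})<3\delta\}$. Write $f=(f_1,\ldots,f_n)$ with $f_i\colon H\to L$. Because the metric on $L^n$ is defined by $d_{L^n}(\xbar,\ybar)=\max_i d_L(x_i,y_i)$, the inequality $d_{L^n}(f(xy),f(x)f(y))<\delta$ implies $d_L(f_i(xy),f_i(x)f_i(y))<\delta$ for each $i$, and clearly $f_i(1_H)=1_L$. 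Thus each $f_i$ is itself a $\delta$-homomorphism from $H$ into $L$.

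Next, since $\delta<\alpha_L$, I would apply Fact \ref{fact:AYG} to each $f_i$ individually to obtain homomorphisms $\tau_i\colon H\to L$ with $d_L(f_i(x),\tau_i(x))<2\delta$ for every $x\in H$. Setting $\tau:=(\tau_1,\ldots,\tau_n)\colon H\to L^n$ yields a genuine group homomorphism, and the sup-metric gives $d_{L^n}(f(x),\tau(x))=\max_i d_L(f_i(x),\tau_i(x))<2\delta$ for all $x\in H$.

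Finally, let $B:=B^{L^n}_{\tau,\delta}=\{x\in H:d_{L^n}(\tau(x),1_{L^n})<\delta\}$, which is a $(\delta,L^n)$-Bohr neighborhood in $H$ by definition. For any $x\in B$, the triangle inequality gives
\[
d_{L^n}(f(x),1_{L^n})\leq d_{L^n}(f(x),\tau(x))+d_{L^n}(\tau(x),1_{L^n})<2\delta+\delta=3\delta,
\]
so $x\in Y$. Hence $B\seq Y$, as required.

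There is no real obstacle here; the only subtle point worth flagging is that the constant $\alpha_L$ must be taken from $L$ itself rather than from $L^n$ in order to have a single threshold uniform in $n$. This works only because of the chosen sup metric on $L^n$, which guarantees that a $\delta$-homomorphism into $L^n$ splits into $n$ coordinatewise $\delta$-homomorphisms into $L$. Any other reasonable choice of metric (e.g. an $\ell^2$-type metric) would still work, but would require a slightly different bookkeeping of constants; the sup metric makes the reduction clean.
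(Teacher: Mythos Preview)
Your proof is correct and essentially identical to the paper's: both take $\alpha_L$ from Fact~\ref{fact:AYG} for $L$, split the $\delta$-homomorphism $f\colon H\to L^n$ into coordinate maps $f_i$ via the sup-metric, apply Fact~\ref{fact:AYG} to each $f_i$, reassemble into a homomorphism $\tau$, and conclude $B^{L^n}_{\tau,\delta}\seq Y$ by the triangle inequality. Your closing remark about why $\alpha_L$ must come from $L$ rather than $L^n$ (to secure uniformity in $n$) is a point the paper leaves implicit.
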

\begin{proof}
Fix $\alpha_L>0$ from Theorem \ref{thm:AYG}. Suppose $H$ is a compact group, and $Y\seq H$ is a $\delta$-approximate $(3\delta,L^n)$-Bohr neighborhood in $H$, for some $n\in\N$ and $0<\delta<\alpha_L$, witnessed by a $\delta$-homomorphism $f\colon H\to L^n$. We may assume $n\geq 1$. For $1\leq i\leq n$, let $f_i\colon H\to L$ be given by $f_i(x)=f(x)_i$. Then each $f_i$ is a $\delta$-homomorphism. Given $1\leq i\leq n$,  Theorem \ref{thm:AYG} provides a homomorphism $\tau_i\colon H\to L$ such that $d(f_i(x),\tau_i(x))<2\delta$ for all $x\in H$. Let $\tau\colon H\to L^n$ be such that $\tau(x)=(\tau_1(x),\ldots,\tau_n(x))$. Then $\tau$ is a homomorphism and $d^n(f(x),\tau(x))<2\delta$ for all $x\in H$. Now we have $B^{L^n}_{\tau,\delta}\seq Y$ by the triangle inequality.
\end{proof}

The next result provides a lower bound on the size of Bohr neighborhoods in finite groups. The proof is a standard averaging argument (adapted from the abelian case; see \cite[Lemma 4.20]{TaoVu} and/or \cite[Lemma 4.1]{GreenSLAG}). We include the details for the sake of clarity and to observe that the method works for Bohr neighborhoods in nonabelian finite groups defined using compact metric groups.

\begin{proposition}\label{prop:Bohrgeneric}
Let $(L,d)$ be a compact metric group and, given $\delta>0$, let $\ell_\delta=\eta_L(\{t\in L:d(t,1_L)<\delta\})$. For any finite group $H$ and $\delta>0$, if $B\seq H$ is a $(2\delta,L)$-Bohr neighborhood in $H$, then $|B|\geq \ell_\delta |H|$. 
\end{proposition}
\begin{proof}
 Fix a finite group $H$, a homomorphism $\tau\colon H\to L$, and some $\delta>0$. Given $x\in H$, let $f_x\colon L\to\{0,1\}$ be the characteristic function of $\{t\in L:d(\tau(x),t)<\delta\}$. Then
\[
\ell_\delta|H|=\sum_{x\in H}\int_{L}f_x~d\eta_L=\int_{L}\sum_{x\in H}f_x~d\eta_L.
\]
So there must be some $t\in L$ such that $\sum_{x\in H}f_x(t)\geq \ell_\delta|H|$. In other words, if $S=\{x\in H:d(\tau(x),t)<\delta\}$ then $|S|\geq \ell_\delta|H|$. Fix $a\in S$. For any $x\in S$, we have
\[
d(\tau(xa\inv),1_L)=d(\tau(x),\tau(a))\leq d(\tau(x),t)+d(\tau(a),t)<2\delta.
\]
Therefore $Sa\inv\seq B^L_{\tau,2\delta}$, and so $|B^L_{\tau,2\delta}|\geq|Sa\inv|=|S|\geq\ell_\delta |H|$.
\end{proof}

\begin{remark}\label{rem:Liemetric}
As we have seen in previous results (e.g., Lemma \ref{lem:Lie}), real tori of the form $\T^r$ have a distinguished role in the study of compact abelian groups. Moreover, in the setting of finite abelian groups, Bohr neighborhoods are usually defined using homomorphisms to the torus (see, e.g., \cite{BourgTAP}, \cite{GreenSLAG}).  Thus, in order to match these definitions  more explicitly, we define the metric $d_{\T^1}(x,y)= \min\{|x-y|,1-|x-y|\}$ on $\T^1$ (identified with $[0,1)$) and the product metric $d_{\T^r}:=d^r_{\T^1}$ on $\T^r$ for $r\in\N$. Throughout the rest of the paper, when we speak of $\T^r$ as a compact metric group, will always work with this choice of metric.
\end{remark}

\begin{definition}\label{def:BohrT}
Given a finite group $H$, a homomorphism $\tau\colon H\to\T^r$, and some $\delta>0$, we let $B^r_{\tau,\delta}$ denote $B^{\T^r}_{\tau,\delta}$. We call $B\seq H$ a \textbf{$(\delta,r)$-Bohr neighborhood in $H$} if $B=B^r_{\tau,\delta}$ for some $\tau$.\footnote{In this case, $r$ and $\delta$ are sometimes referred to as the \emph{rank} and \emph{width} of $B$, respectively.}
\end{definition}

In Theorem \ref{thm:regexp}, we showed that NIP sets in finite groups of uniformly bounded exponent are approximated by normal subgroups of uniformly bounded index. As noted in the introduction, we cannot expect such a result for NIP sets in finite groups $G$ of unrestricted exponent. So instead of subgroups of $G$, we will consider pairs $(B,H)$, where $H$ is a normal subgroup of $G$ and $B$ is a $(\delta,r)$-Bohr neighborhood in $H$. Thus, in Proposition \ref{prop:Bohrlike} below, we point out some ways in which Bohr neighborhoods behave like normal subgroups of ``small" index. We will need the following minor generalization of a well-known exercise, namely, if $G$ is an amenable group and $A\seq G$ has positive upper density then $AA\inv$ is generic. 

\begin{proposition}\label{prop:separating}
Suppose $G$ is a group, $\cB$ is a left-invariant Boolean algebra of subsets of $G$, and $\nu$ is a left-invariant  finitely additive probability measure on $\cB$. Suppose $A\in\cB$ is such that $\nu(A)>0$. Then, for any $X\seq G$, there is a finite set $F\seq X$ such that $|F|\leq\frac{1}{\nu(A)}$ and $X\seq F AA\inv$.
\end{proposition}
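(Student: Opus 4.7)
The plan is a standard covering-by-disjoint-translates argument, executed inside $G\backslash Z$. Note first that for $\nu$ to be meaningfully called left-invariant, $\cB$ must be closed under left translation, so every $gA$ lies in $\cB$ with $\nu(gA)=\nu(A)$. Also, $\nu(A)>0$ forces $A$ to be nonempty; fix once and for all some $a_0\in A$.

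Construct $F$ greedily as follows. Start with $F_0=\emptyset$. Having chosen pairwise-$A$-disjoint elements $g_1,\dots,g_n$ in $G\backslash Z$ (meaning the translates $g_1A,\dots,g_nA$ are pairwise disjoint), if there exists $g\in G\backslash Z$ with $gA\cap g_iA=\emptyset$ for all $i\leq n$, then add $g$ as $g_{n+1}$; otherwise halt and set $F=\{g_1,\dots,g_n\}$. By construction $F\seq G\backslash Z$, and at every stage the sets $g_1A,\dots,g_nA$ are pairwise disjoint members of $\cB$. Finite additivity and left-invariance of $\nu$ give
\[
n\,\nu(A)=\sum_{i=1}^n\nu(g_iA)=\nu\!\left(\bigcup_{i=1}^n g_iA\right)\leq 1,
\]
so $n\leq 1/\nu(A)$; in particular the process terminates with a finite $F$ satisfying $|F|\leq 1/\nu(A)$.

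For coverage, fix an arbitrary $g\in G\backslash Z$. If $g\in F$, then $g=g\cdot a_0\cdot a_0\inv\in gAA\inv\seq FAA\inv$. If $g\notin F$, then by maximality of $F$ we cannot extend the disjoint family using $g$, so there is some $g_i\in F$ with $gA\cap g_iA\neq\emptyset$; choose $a,a'\in A$ with $ga=g_ia'$, whence $g=g_ia'a\inv\in g_iAA\inv\seq FAA\inv$. In either case $g\in FAA\inv$, so $G\backslash Z\seq FAA\inv$, completing the argument.

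There is no real obstacle here — the only mild points worth checking are that $A$ is nonempty (so $g\in F$ itself lies in $FAA\inv$) and that the greedy selection is always made from $G\backslash Z$ so that the output $F$ lies in $G\backslash Z$ as required.
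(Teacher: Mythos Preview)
Your proof is correct and essentially identical to the paper's: both pick a maximal subset $F\seq G\backslash Z$ with pairwise disjoint $A$-translates (the paper phrases this as a set of maximal size that ``separates $A$'', you build it greedily), bound $|F|\leq 1/\nu(A)$ via finite additivity and left-invariance, and then use maximality to cover $G\backslash Z$ by $FAA\inv$. The only cosmetic difference is that the paper does not split into the cases $g\in F$ versus $g\notin F$, since for $g\in F$ one can simply take $g_i=g$ in the maximality step.
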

\begin{proof}
We say that $Y\seq G$ \emph{separates $A$} if $xA\cap yA=\emptyset$ for all distinct $x,y\in Y$. By the assumptions on $\nu$, if $Y\seq G$ separates $A$ then $|Y|\leq\frac{1}{\nu(A)}$. Choose a finite set $F\seq X$ with maximal size among subsets of $X$ that separate $A$. Fix $x\in X$. Then there is $y\in F$ such that $xA\cap yA\neq\emptyset$, and so we may fix $z\in xA\cap yA$. Then $y\inv z\in A$ and $z\inv x\in A\inv$, which means $y\inv x\in AA\inv$, and so $x\in FAA\inv$.
\end{proof}

\begin{proposition}\label{prop:Bohrlike}
Let $G$ be a finite group and $H\leq G$ be a normal subgroup of index $n$. Suppose $B$ is a $(\delta,r)$-Bohr neighborhood in $H$, where $r\in\N$ and $0<\delta\leq 2$.
\begin{enumerate}[$(a)$]
\item $B=B\inv$, $1_G\in B$, and $gB=Bg$ for any $g\in G$.
\item For any $X\seq G$ there is $F\seq X$ such that $|F|\leq n(\frac{2}{\delta})^r$ and $X\seq FB$.  Thus $G$ is covered by at most $n(\frac{2}{\delta})^r$ translates of $B$.
\end{enumerate}
\end{proposition}
\begin{proof}
Part $(a)$. We have $B=B\inv$ by bi-invariance of $d_{\T^r}$, and clearly $1_G\in B$. If $g\in G$ and $x\in B$ then $gxg\inv\in H$ (since $H$ is normal), and so $gxg\inv\in B$ by bi-invariance of $d_{\T^r}$. It follows that $gB=Bg$ for any $g\in G$.

Part $(b)$. Suppose $B=B^r_{\tau,\delta}$ for some $\tau\colon H\to \T^r$, and let $B_0=B^r_{\tau,\delta/2}$. Note that $\eta_{\T^r}(\{t\in \T^r:d_{\T^r}(t,0_{\T^r})<\frac{\delta}{4}\})=(\frac{\delta}{2})^r$, and so
$|B_0|\geq (\frac{\delta}{2})^r|H|= n\inv(\frac{\delta}{2})^r|G|$ by Proposition \ref{prop:Bohrgeneric}. Now fix $X\seq G$. By Proposition \ref{prop:separating} (with $\nu$ the normalized counting measure on $G$), we have $X\seq FB_0B_0\inv$ for some $F\seq X$ with $|F|\leq n(\frac{2}{\delta})^r$. Finally, note that if $x,y\in B_0$ then
\[
d(\tau(xy\inv),0_{\T^r})=d(\tau(x),\tau(y))\leq d(\tau(x),0_{\T^r})+d(\tau(y),0_{\T^r})<\delta.
\]
So $B_0B_0\inv\seq B$, and we have $X\seq FB$. 
\end{proof}

Note that part $(a)$ of the previous fact holds for any compact metric group $(L,d)$ in place of $\T^r$ (and does not use that $G$ is finite); and the analogue of part $(b)$ holds for any $(L,d)$ with $(\frac{\delta}{2})^r$ replaced by $\ell_{\delta/4}$ (from Proposition \ref{prop:Bohrgeneric}). We refer the reader to \cite[Section 4.4]{TaoVu} and \cite[Sections 3 \& 4]{GreenSLAG} for more on the role of Bohr neighborhoods in arithmetic combinatorics and discrete Fourier analysis.

\section{Structure and regularity: the general case}\label{sec:gen}

The next goal is a result analogous to Theorem \ref{thm:UPprof}, but without the assumption that $G/G^{00}_{\theta^r}$ is profinite. For this, we need  to understand more about descriptions of $G^{00}_{\theta^r}$ as an intersection of definable subsets of $G$. The goal is to find properties of definable sets which are both interesting algebraically, and also sufficiently first-order so that they can be transferred to finite groups in arguments with ultraproducts. In particular, we will use approximate Bohr neighborhoods. 

Suppose $G$ is a group definable in a sufficiently saturated structure, and $\Gamma\leq G$ is a type-definable normal subgroup of bounded index. By Lemma \ref{lem:Lie}, $\Gamma$ is an intersection of a bounded number of definable finite-index normal subgroups of $G$ whenever $G/\Gamma$ is profinite. The next result shows that in general, we can write $\Gamma=\bigcap_{i\in I}W_i$ where each $W_i$ is a definable subset of a definable finite-index normal subgroup $H_i\leq G$, and there is a Bohr neighborhood $B_i$ in $H_i$ such that $\Gamma\seq B_i\seq W_i$. Moreover, $B_i$ is obtained from a definable homomorphism to a compact connected Lie group (so, in particular, $B_i$ is \emph{co-type-definable}).

\begin{proposition}\label{prop:Bohr}
Let $G$ be a group definable in a sufficiently saturated structure $M^*$.
Suppose $\Gamma\leq G$ is type-definable and normal of bounded index. Then there is a bounded jsl $I$ and a decreasing net $(W_i)_{i\in I}$ of definable subsets of $G$ such that $\Gamma=\bigcap_{i\in I} W_i$ and, for all $i\in I$, there are
\begin{enumerate}[\hspace{5pt}$\ast$]
\item a definable finite-index normal subgroup $H_i\leq G$,
\item a definable homomorphism $\pi_i\colon H_i\to L_i$, where $(L_i,d_i)$ is a compact connected metric Lie group, and
\item a real number $\delta_i>0$,  
\end{enumerate}
such that $\Gamma\seq \ker\pi_i\seq B^{L_i}_{\pi_i,\delta_i}\seq W_i\seq H_i$. Moreover:
\begin{enumerate}[$(a)$]
\item If $\Gamma$ is countably-definable then we may assume $I=\N$.
\item If $G=M^*$  and $\Gamma$ is $\theta$-type-definable for some invariant formula $\theta(x;\ybar)$, then we may assume $W_i$, $H_i$, and $\pi_i$ are $\theta$-definable.
\item If $G=M^*$ and $G$ is pseudofinite, then we may assume $L_i=\T^{n_i}$ for some $n_i\in\N$.
\item If $G/\Gamma$ is abelian then we may assume $H_i=G$ and $L_i=\T^{n_i}$ for some $n_i\in\N$.
\end{enumerate}
\end{proposition}
\begin{proof}
Let $(\Gamma_i)_{i\in I_0}$ and $(H_i)_{i\in I_0}$ be as in Lemma \ref{lem:Lie}, where $I_0$ is a small \emph{jsl}. For each $i\in I_0$, let $L_i=H_i/\Gamma_i$ and equip $L_i$ with some bi-invariant metric $d_i$ (by Fact \ref{fact:compactG}$(e)$).  Let $\pi_i:H_i\to L_i$ be the canonical homomorphism. Then $\pi_i$ is definable since $G\to G/\Gamma_i$ is definable. 

For each $i\in I_0$, let $(W^i_n)_{n=0}^\infty$ be a decreasing sequence of definable subsets of $G$ such that $\Gamma_i=\bigcap_{n=0}^\infty W^i_n$. Let $I$ be the set of all finite subsets of $I_0\times\N$, and view $I$ as a \emph{jsl} under the subset ordering. Given $\sigma=\{(i_1,n_1),\ldots,(i_k,n_k)\}\in I$, let $i_\sigma=\sup\{i_1,\ldots,i_k\}$, and set $H_\sigma:=H_{i_\sigma}$, $W_\sigma:=H_\sigma\cap\bigcap_{t=1}^k W_{n_t}^{i_t}$,  $\Gamma_\sigma:=\Gamma_{i_\sigma}$, $L_\sigma=L_{i_\sigma}$, and $\pi_\sigma=\pi_{i_\sigma}$. Note that $\Gamma_\sigma\seq W_\sigma\seq H_\sigma$ for all $\sigma\in I$, and $\bigcap_{\sigma\in I}W_\sigma=\Gamma$. By choice of $i_\sigma$, we also have that $(W_\sigma)_{\sigma\in I}$ is decreasing.

Now, given  $\sigma\in I$, the set $U_\sigma=\{a\Gamma_{\sigma}\in G/\Gamma_{\sigma}:a\Gamma_{\sigma}\seq W_\sigma\}$ is an identity neighborhood in $L_{\sigma}$ (by  Fact \ref{fact:QLT}$(b)$) and, by construction, $\pi_{\sigma}\inv(U_\sigma)\seq W_\sigma$. So choose $\delta_\sigma>0$ such that $U_\sigma$ contains the open ball of radius $\delta_\sigma$ around $1_{L_\sigma}$. Note that $\Gamma\seq\Gamma_\sigma=\ker\pi_\sigma\seq B^{L_\sigma}_{\pi_\sigma,\delta_\sigma}\seq W_\sigma$. 
This finishes the proof of the main statement. 

We now deal with the remaining claims. Claims $(b)$ and $(c)$ follow by applying Lemma \ref{lem:Lie}$(b,d)$ in the above construction. 
For claim $(a)$, suppose $\Gamma$ is countably-definable. Then, by Lemma \ref{lem:Lie}$(a)$, we may assume $I_0=\N$. So $I$ is the \emph{jsl} of finite subsets of $\N\times\N$ under the subset ordering, which contains a cofinal sub-\emph{jsl} isomorphic to $\N$. 

Finally, for claim $(d)$, suppose $G/\Gamma$ is abelian. Then $G/\Gamma_i$ is a compact abelian Lie group for any $i\in I_0$, and thus isomorphically embeds in $\T^{n_i}$ for some $n_i\in\N$ by Fact \ref{fact:compactG}$(g)$. So in the argument above, we can replace each $H_i$ and $\pi_i$ with $G$ and $G\to G/\Gamma_i\seq \T^{n_i}$, respectively. 
\end{proof}

Note that in the previous result, the homomorphism $\pi_i\colon H_i\to L_i$ is also surjective, except in part $(d)$ where we can replace $H_i$ with $G$ when $G/\Gamma$ is abelian. The purpose of part $(d)$ is to note that if $G$ is already abelian then one can obtain Bohr neighborhoods defined by tori without first passing to a subgroup $H_i$, and without  Theorem \ref{thm:comm} or an extra pseudofiniteness assumption.

One drawback of Proposition \ref{prop:Bohr} is that the Bohr neighborhood $B^{L_i}_{\pi_i,\delta_i}$ is not necessarily definable. In order to work with definable objects, we will have to consider approximate Bohr neighborhoods.

\begin{definition}\label{def:Bohrchain}
Let $G$ be a group definable in a sufficiently saturated structure $M^*$. Suppose $H\leq G$ is definable and $\pi\colon H\to L$ is a definable homomorphism to a compact metric group $(L,d)$. Given $t\geq 1$, we say that a sequence $(Y_m)_{m=0}^\infty$ of subsets of $H$ is a \textbf{definable $(t,\pi)$-approximate Bohr chain in $H$} if $(Y_m)_{m=0}^\infty$ is decreasing, $\ker\pi=\bigcap_{i=0}^\infty Y_m$, and there are  $(\delta_m)_{m=0}^\infty$ and $(f_m)_{m=0}^\infty$ such that:
\begin{enumerate}[$(i)$]
\item $(\delta_m)_{m=0}^\infty$ is a decreasing sequence positive real numbers converging to $0$,
\item for all $m$, $f_m\colon H\to L$ is a definable $\delta_m$-homomorphism with finite image, and
\item for all $m$, $Y_m=\{x\in H:d(f_m(x),1_L)<t\delta_m\}$.
\end{enumerate}
Moreover,  if $G=M^*$ and $H$, $\pi$, and $f_m$ are all $\theta$-definable, for some fixed formula $\theta(x;\ybar)$, then we say ``$\theta$-definable" in place of ``definable".
\end{definition}

\begin{proposition}\label{prop:Ydef} 
Suppose $(Y_m)_{m=0}^\infty$ is a definable (respectively, $\theta$-definable) $(t,\pi)$-approximate Bohr chain in $H\leq G$, as in Definition \ref{def:Bohrchain}. Then each set $Y_m$ is  a $\delta_m$-approximate $(t\delta_m,L)$-Bohr neighborhood in $H$, and a definable (respectively, $\theta$-definable) subset of $G$.\footnote{On the other hand, we are \emph{not} claiming that the family $\{Y_m:m\in\N\}$ is \emph{uniformly} definable (which could be misconstrued from our choice of terminology).}
\end{proposition}
\begin{proof}
The first claim is obvious. For definability, note that for any $m\in\N$, $H$ is partitioned into finitely many fibers $f_m\inv(\lambda)$ for $\lambda\in f_m(H)$, each of which is definable (respectively, $\theta$-definable) by Remark \ref{rem:defmap}. Now $Y_m$ is a union of the (finitely many) fibers $f_m\inv(\lambda)$ where $\lambda\in f(H)$ is such that $d(\lambda,1_L)<\epsilon$.
\end{proof}

The parameter $t$ in Definition \ref{def:Bohrchain} is introduced in order to control the ``width" $\epsilon$ and the ``error" $\delta$ in a $\delta$-approximate $(\epsilon,L)$-Bohr neighborhood. Specifically, it is desirable to have $\epsilon$ be some constant multiple $t$ of $\delta$, and in the following results we will choose $t$ arbitrarily. This will eventually be used to find actual Bohr neighborhoods inside approximate Bohr neighborhoods, with $L=\T^n$ for some $n\in\N$, in which case setting $t=3$ will suffice (via Corollary \ref{cor:findBohr}).

\begin{lemma}\label{lem:Bohrapprox}
Let $G$ be a group definable in a sufficiently saturated structure $M^*$. Suppose $H$ is a definable subgroup of $G$ and $\pi\colon H\to L$ is a definable homomorphism to a compact metric group $(L,d)$. Then, for any $t\geq 1$, there is a definable $(t,\pi)$-approximate Bohr chain $(Y_m)_{m=0}^\infty$ in $H$. Moreover, if $G=M^*$ and $H$ and $\pi$ are $\theta$-definable for some invariant formula $\theta(x;\ybar)$, then $(Y_m)_{m=0}^\infty$ is a $\theta$-definable $(t,\pi)$-approximate Bohr chain. 
\end{lemma}
\begin{proof}
Given $\lambda\in L$ and $\epsilon>0$, let $K(\lambda,\epsilon)\seq L$ and $U(\lambda,\epsilon)\seq L$ be the closed ball of radius $\epsilon$ around $\lambda$ and the open ball of radius $\epsilon$ around $\lambda$, respectively. 

Fix $m\geq 1$. Choose a finite set $\Lambda\seq L$ such that $L=\bigcup_{\lambda\in\Lambda}K(\lambda,\frac{1}{2m})$ and $1_L\in\Lambda$. For any $\lambda\in\Lambda$, since $\pi$ is definable, there is a definable set $D_\lambda\seq H$ such that $\pi\inv(K(\lambda,\frac{1}{2m}))\seq D_\lambda\seq\pi\inv(U(\lambda,\frac{1}{m}))$ (see Remark \ref{rem:defmap}$(a)$). Enumerate $\Lambda=\{\lambda_1,\ldots,\lambda_k\}$, with $\lambda_1=1_L$, and, for $1\leq i\leq k$, let $D_i=D_{\lambda_i}$. For $1\leq i\leq k$, define $E_i=D_i\backslash \bigcup_{j<i}D_j$. Then $E_1,\ldots,E_k$ are definable and partition $H$. This determines a definable function $f_m\colon H\to \Lambda$ such that $f_m(x)=\lambda_i$ if and only if $x\in E_i$. For any $x\in H$, we have $x\in D_{f_m(x)}\seq\pi\inv(U(f_m(x),\frac{1}{m}))$, and so $d(\pi(x),f_m(x))<\frac{1}{m}$. Note that $f_m(1_H)=1_L$ by definition. Also, given $x,y\in H$, we have
\begin{align*}
d(f_m(xy),f_m(x)f_m(y)) &\leq d(f_m(xy),\pi(xy))+d(\pi(x)\pi(y),f_m(x)\pi(y))\\
 &\hspace{1.52in}+d(f_m(x)\pi(y),f_m(x)f_m(y))\\
 &= d(f_m(xy),\pi(xy))+d(\pi(x),f_m(x))+d(\pi(y),f_m(y))<\textstyle\frac{3}{m}.
\end{align*}
Altogether, $f_m\colon H\to L$ is a $\frac{3}{m}$-homomorphism.

 Now fix an integer $t\geq 1$.  For $m\in\N$, define
\[
Y_m=\textstyle\left\{x\in H:d(f_m(x),1_L)<\frac{3t}{m}\right\}.
\] 
Note that $D_1\seq Y_m$, and so $\ker\pi\seq\pi\inv(K(1_L,\frac{1}{2m}))\seq Y_m$. We now have a sequence $(Y_m)_{m=1}^\infty$ of definable subsets of $H$, with $\ker\pi \seq Y_m$ for all $m\in\N$.  Moreover, for any $m\in\N$, if $x\in Y_m$ then 
\[
d(\pi(x),1_L)\leq d(\pi(x),f_m(x))+d(f_m(x),1_L)<\textstyle\frac{3t+1}{m}.
\]
This implies $\ker\pi=\bigcap_{m=0}^\infty Y_m$. Finally, given $m\in\N$, we have $\pi\inv(K(1_L,\frac{1}{2m}))\seq Y_m\seq\pi\inv (U(1_L,\frac{3t+1}{m}))$. In particular, if $n\geq (6t+2)m$, then $Y_n\seq Y_m$. So, after thinning the sequence, we may assume $Y_{m+1}\seq Y_m$ for all $m\in\N$. Altogether, if $\delta_m=\frac{3}{m}$, then $(\delta_m)_{m=1}^\infty$ and $(f_m)_{m=1}^\infty$ witness that $(Y_m)_{m=1}^\infty$ is a definable $(t,\pi)$-approximate Bohr chain in $H$.

For the ``moreover" statement, suppose $G=M^*$, and $H$ and $\pi$ are $\theta$-definable for some formula $\theta(x;\ybar)$. Then one can choose each $D_\lambda$ to be $\theta$-definable (by Remark \ref{rem:defmap}). So each $f_m$ is $\theta$-definable by construction, and  $(Y_m)_{m=1}^\infty$ is a $\theta$-definable $(t,\pi)$-approximate Bohr chain.  
\end{proof}

We now combine the above ingredients to prove a structure and regularity theorem for $\theta^r$-definable sets in a sufficiently saturated pseudofinite group $G$, where $\theta(x;\ybar)$ is an arbitrary invariant NIP formula. Recall that we use $\mu$ for the pseudofinite counting measure.

\begin{theorem}\label{thm:UPgen}
Let $G$ be a sufficiently saturated pseudofinite expansion of a group, and suppose $\theta(x;\ybar)$ is an invariant NIP formula. Fix a $\theta^r$-definable set $A\seq G$ and some $\epsilon>0$. Then there are
\begin{enumerate}[\hspace{5pt}$\ast$]
\item a $\theta^r$-definable finite-index normal subgroup $H\leq G$,
\item a $\theta^r$-definable homomorphism $\pi\colon H\to\T^n$, for some $n\in\N$, and
\item a $\theta^r$-definable set $Z\seq G$, with $\mu(Z)<\epsilon$,
\end{enumerate}
such that, for any integer $t\geq 1$, there is
\begin{enumerate}[\hspace{5pt}$\ast$]
\item a $\theta^r$-definable  $(t,\pi)$-approximate Bohr chain $(Y_m)_{m=0}^\infty$ in $H$
\end{enumerate}
satisfying the following properties, for any $m\in\N$.
\begin{enumerate}[$(i)$]
\item \textnormal{(structure)} There is a set $D_m\seq G$, which is a union of finitely many left translates of $Y_m$, such that
\[
\mu((A\smd D_m)\backslash Z)=0.
\]
\item \textnormal{(regularity)} For any $g\in G\backslash Z$, either $\mu(gY_m\cap A)=0$ or $\mu(gY_m\backslash A)=0$. 
\end{enumerate}
Moreover, if $G/G^{00}_{\theta^r}$ is abelian then we may assume $H=G$.
\end{theorem}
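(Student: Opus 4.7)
The plan is to emulate the proof of Theorem \ref{thm:UPprof}, substituting the approximate Bohr chain machinery of Proposition \ref{prop:Bohr} and Lemma \ref{lem:Bohrapprox} for the role played by finite-index subgroups in the profinite case. First, I would invoke Fact \ref{fact:G00}, Lemma \ref{lem:Lie}, and Corollary \ref{cor:pfconnect} to produce $G^{00}_{\theta^r}$ along with sequences $(\Gamma_t)$ and $(H_t)$ satisfying $H_t/\Gamma_t\cong\T^{n_t}$, then apply Proposition \ref{prop:Bohr} to obtain a decreasing sequence $(W_i)$ of $\theta^r$-definable sets with $\bigcap_i W_i=G^{00}_{\theta^r}$, where each $W_i$ sits inside a $\theta^r$-definable finite-index normal subgroup $H_i$ and contains a Bohr neighborhood associated with a $\theta^r$-definable surjective homomorphism $\pi_i\colon H_i\to\T^{n_i}$. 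Feeding $(W_i)$ into Lemma \ref{lem:oneY} together with $X$ and $\epsilon$ produces a $\theta^r$-definable $Z\seq G$ with $\mu(Z)<\epsilon$ and some index $i_0$ at which $W_{i_0}$ satisfies the usual regularity dichotomy for $g\notin Z$.

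Next, I would set $H:=H_{i_0}$, $\pi:=\pi_{i_0}$, and $n:=n_{i_0}$, so that $\pi\colon H\to\T^n$ is a $\theta^r$-definable surjective homomorphism with $\ker\pi\seq W_{i_0}$ (by the construction in Proposition \ref{prop:Bohr}). Given the input integer $t\geq 1$, Lemma \ref{lem:Bohrapprox} applied to $\pi$ gives a $(\theta^r,t,n)$-approximate Bohr chain $(Y'_m)$ in $H$ whose intersection is $\ker\pi$. Since $\ker\pi\seq W_{i_0}$ and $W_{i_0}$ is definable, saturation provides an index $m_0$ with $Y'_{m_0}\seq W_{i_0}$; re-indexing by $Y_m:=Y'_{m+m_0}$ yields an approximate Bohr chain with $Y_m\seq W_{i_0}$ for every $m$. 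The regularity statement is then automatic: for $g\notin Z$ and any $m$, the inclusion $gY_m\seq gW_{i_0}$ forces $\mu(gY_m\cap X)\leq\mu(gW_{i_0}\cap X)$ and $\mu(gY_m\setminus X)\leq\mu(gW_{i_0}\setminus X)$, so at least one of the latter vanishes.

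For each $m$, the structure statement would be obtained by exploiting the actual Bohr neighborhood $B_m:=\pi^{-1}(B(1_{\T^n},\tfrac{1}{2m}))$ that Lemma \ref{lem:Bohrapprox}'s construction places inside $Y_m$. Since $\pi$ is surjective, I would pick a finite subset $\{\lambda_j\}\seq\T^n$ whose $\tfrac{1}{2m}$-balls cover $\T^n$ together with sections $h_j\in\pi^{-1}(\lambda_j)$, obtaining $H=\bigcup_j h_jB_m\seq\bigcup_j h_jY_m$. Combining with coset representatives $a_k$ of $H$ in $G$ produces a finite covering $T_m:=\{a_kh_j\}$ of $G$ by translates of $Y_m$. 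Defining $D_m$ to be the union of those $cY_m$ (with $c\in T_m$) for which $c\notin Z$ and $\mu(cY_m\setminus X)=0$ gives a finite union of translates; the inclusion $\mu((D_m\setminus X)\setminus Z)=0$ is immediate, and the regularity dichotomy disposes of the covering centers outside $Z$.

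The main obstacle is the handling of covering centers $c\in T_m\cap Z$, whose translates $cY_m$ can in principle contain positive mass in $(X\setminus D_m)\setminus Z$. I plan to resolve this by refining the choice of $Z$ at the Lemma \ref{lem:oneY} step, exploiting that $E_X\seq G/G^{00}_{\theta^r}$ has Haar measure zero so that $Z$ can be taken as an arbitrarily fine $\theta^r$-definable thickening of the pullback of $E_X$, and then performing a $W_{i_0}$-closure replacement analogous to the $Z\mapsto\{a:aH\seq Z\}$ step in the proof of Theorem \ref{thm:UPprof}, so that any boundary translate $cY_m$ with $c\in Z$ is absorbed into $Z$ while preserving $\mu(Z)<\epsilon$ uniformly in $m$.
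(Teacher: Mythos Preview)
Your setup and the regularity argument match the paper almost exactly: apply Proposition~\ref{prop:Bohr} to get the $(W_i)$, use Lemma~\ref{lem:oneY} to fix $W:=W_{i_0}$ and $Z$, extract $H$, $\pi$, $n$ from Proposition~\ref{prop:Bohr}, build the Bohr chain via Lemma~\ref{lem:Bohrapprox}, and truncate so that $Y_m\subseteq W$. Good.

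The gap is in your treatment of the structure statement~$(i)$. You correctly isolate the obstacle---translating centers $c\in T_m\cap Z$---but your proposed fix does not work. A ``$W_{i_0}$-closure'' of $Z$ in the style of Theorem~\ref{thm:UPprof} would require something like $ZY_m\subseteq Z$; since $\mu(Y_m)>0$ is fixed (bounded below independently of $\epsilon$), this would force $\mu(Z)$ to be at least $\mu(Y_m)$ whenever $Z$ is nonempty, destroying the bound $\mu(Z)<\epsilon$. There is no analogue here of the coset structure that made the replacement harmless in the profinite case.

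The paper sidesteps the obstacle entirely by choosing the covering centers inside $G\setminus Z$ from the start, using Proposition~\ref{prop:separating}. Given $m$, one first finds (by compactness, since $\ker\pi=\bigcap_m Y_m$ is a group) some $r\geq m$ with $Y_rY_r^{-1}\subseteq Y_m$. Since $Y_r$ contains a set of bounded index, $\mu(Y_r)>0$. Proposition~\ref{prop:separating} then yields a \emph{finite} $F\subseteq G\setminus Z$ with $G\setminus Z\subseteq FY_rY_r^{-1}\subseteq FY_m$. Now every center lies outside $Z$, the regularity dichotomy applies to each $gY_m$ with $g\in F$, and setting $D=\bigcup\{gY_m:g\in F,\ \mu(gY_m\setminus X)=0\}$ gives $\mu((X\smd D)\setminus Z)=0$ immediately. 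Replace your covering-of-$G$ argument with this, and the proof goes through.
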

\begin{proof}
Let $(W_i)_{i=0}^\infty$ be a sequence of $\theta^r$-definable sets in $G$ satisfying the conditions of Proposition \ref{prop:Bohr} with $\Gamma=G^{00}_{\theta^r}$.   By Lemma \ref{lem:oneY}, there is a $\theta^r$-definable set $Z\seq G$ and some $i\in\N$ such that $\mu(Z)<\epsilon$ and, if $W:=W_i$, then for all $g\in G\backslash Z$, we have $\mu(gW\cap A)=0$ or $\mu(gW\backslash A)=0$. Proposition \ref{prop:Bohr} associates to $W$ a $\theta^r$-definable homomorphism $\pi\colon H\to\T^n$, where $H$ is a $\theta^r$-definable finite-index normal subgroup of $G$. If $G/G^{00}_{\theta^r}$ is abelian then we may further assume $H=G$. Fix $t\geq 1$. By Lemma \ref{lem:Bohrapprox}, there is a $\theta^r$-definable $(t,\pi)$-approximate Bohr chain $(Y_m)_{m=0}^\infty$ in $H$. Recall that each $Y_m$ is definable by Proposition \ref{prop:Ydef}. Since $\ker\pi$ is type-definable and contained in the definable set $W$, it follows from saturation  that $Y_m\seq W$ for sufficiently large $m$. So for sufficiently large $m$ we have that, for any $g\in G\backslash Z$, either $\mu(gY_m\cap A)=0$ or $\mu(gY_m\backslash A)=0$. Thus, after removing finitely many sets $Y_m$ from the sequence, we have condition $(ii)$. 

Toward proving condition $(i)$, fix $m\in\N$. Since $\ker\pi$ is a subgroup of $H$, we may use saturation (similar to as in the proof of Lemma \ref{lem:oneY}), to find some $r\geq m$ such that $Y_r Y_r\inv\seq Y_m$. Since $Y_r$ contains a type-definable bounded-index subgroup of $G$ (namely, $\ker\pi)$, it follows that $Y_r$ is generic and so $\mu(Y_r)>0$. By Proposition \ref{prop:separating}, there is a finite set $F\seq G\backslash Z$ such that $G\backslash Z\seq FY_m$. Let $I$ be the set of $g\in F$ such that $\mu(gY_m\backslash A)=0$, and note that if $g\in F\backslash I$ then $\mu(gY_m\cap A)=0$. Let $D_m=IY_m$. Since $G\backslash Z\seq FY_m$, we have
\[
A\smd D_m\seq Z\cup \bigcup_{g\in I} (gY_m\backslash A)\cup\bigcup_{g\in F\backslash I}(gY_m\cap A),
\]
 and so $\mu((A\smd D_m)\backslash Z)=0$. 
\end{proof}

The next goal is our main result for NIP sets in arbitrary finite groups (see Theorem \ref{thm:mainNIP}). Roughly speaking, we will show that if $A$ is a $k$-NIP set in a finite group $G$, then there is a normal subgroup $H\leq G$, and Bohr neighborhood $B$ in $H$, such that almost  all translates of $B$ are almost contained in $A$ or almost disjoint from $A$ (up to some error $\epsilon>0$). Moreover, $A$ is approximately a union of translates of $B$, and the index of $H$ and  complexity of $B$ are bounded in terms of $k$ and $\epsilon$.

The proof of Theorem \ref{thm:mainNIP} from Theorem \ref{thm:UPgen} is of course in analogy to the proof of Theorem \ref{thm:regexp} from Theorem \ref{thm:UPprof}. However, the argument is significantly more complicated, due to certain crucial differences between  Bohr neighborhoods and subgroups. Specifically, Bohr neighborhoods are not closed under the group operation and, moreover, distinct translates of a Bohr neighborhood need not be disjoint.\footnote{This is also the reason why Theorem \ref{thm:mainNIP} does not yield a graph regularity statement for $\Gamma_G(A)$ involving a partition into translates of Bohr neighborhoods.} Thus, when using Theorem \ref{thm:UPgen} to obtain results for finite groups, we will focus solely on the \emph{regularity} statement. In order to then deduce \emph{structure} from \emph{regularity} in finite groups, we will need to argue similarly as in the end of the proof of Theorem \ref{thm:UPgen}, while also taking into account the quantitative behavior of Bohr neighborhoods described in Propositions \ref{prop:Bohrgeneric} and \ref{prop:Bohrlike}. So we first prove a lemma, which gives a rather flexible version of the \emph{regularity} statement, and also contains the ultraproduct argument necessary to prove Theorem \ref{thm:mainNIP}.

\begin{lemma}\label{lem:downstairs}
For any $k\geq 1$ and $\epsilon>0$, and any function $\gamma\colon (\Z^+)^2\times(0,1]\to\R^+$,  there is $n=n(k,\epsilon,\gamma)$ such that the following holds. Suppose $G$ is a finite group and $A\seq G$ is $k$-NIP. Then there are
\begin{enumerate}[\hspace{5pt}$\ast$]
\item a normal subgroup $H\leq G$ of index $m\leq n$,
\item a $(\delta,r)$-Bohr neighborhood $B$ in $H$ and a $\delta$-approximate $(3\delta,r)$-Bohr neighborhood $Y$ in $H$, where $r\leq n$ and $\frac{1}{n}\leq \delta\leq 1$, and
\item a set $Z\seq G$, with $|Z|<\epsilon|G|$,
\end{enumerate}
such that $B\seq Y\seq H$ and, for any $g\in G\backslash Z$, either $|gY\cap A|< \gamma(m,r,\delta)|B| $ or $|gY\backslash A|< \gamma(m,r,\delta)|B|$. Moreover, $H$, $Y$, and $Z$ are in the Boolean algebra generated by $\{gAh:g,h\in G\}$, and if $G$ is abelian then we may assume $H=G$.
\end{lemma}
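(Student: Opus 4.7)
The plan is to prove the lemma by ultraproduct compactness, deriving the finite-group statement from Theorem~\ref{thm:UPgen}. Suppose for contradiction that for some fixed $k \geq 1$, $\epsilon > 0$, and $\gamma$ no such $n$ exists: then for every $i \in \N$ there are a finite group $G_i$ and a $k$-NIP set $A_i \seq G_i$ admitting no witness $(H, B, Y, Z)$ of the required form with parameters $m, r \leq i$ and $\delta \geq 1/i$. Working in the group language expanded by a predicate for $A$ (and with the standard formalism for the pseudofinite counting measure $\mu$ as in \cite{CPpfNIP}), form $G$ as a sufficiently saturated extension of $\prod_\cU (G_i, A_i)$ for some nonprincipal ultrafilter $\cU$. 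Set $\theta(x;y) := x \in y \cdot A$; this formula is left-invariant and $k$-NIP.

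Apply Theorem~\ref{thm:UPgen} to $X = A$ with the given $\epsilon$ and with parameter $t = 3$. This yields a $\theta^r$-definable finite-index normal subgroup $H \leq G$, a $\theta^r$-definable surjective homomorphism $\pi \colon H \to \T^n$, a $\theta^r$-definable set $Z \seq G$ with $\mu(Z) < \epsilon$, and a $(\theta^r, 3, n)$-approximate Bohr chain $(Y_m)_{m=0}^\infty$ in $H$ witnessed by $\theta^r$-definable $\delta_m$-homomorphisms $f_m\colon H\to\T^n$ (with finite image) and with $\delta_m \to 0$, such that for every $m \in \N$ and every $g \in G \setminus Z$ either $\mu(gY_m \cap A) = 0$ or $\mu(gY_m \setminus A) = 0$. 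Fix $m$ large enough that $\delta := \delta_m < \alpha_{\T^n}$, where $\alpha_{\T^n}$ is the constant from Fact~\ref{fact:AYG}. Let $p := [G:H]$, set $r := n$, and let $\ell > 0$ denote the Haar measure of $\{t \in \T^r : d_{\T^r}(t, 0_{\T^r}) < \delta/2\}$ (so $\ell$ depends only on $r$ and $\delta$). Finally set $\eta_0 := \gamma(p, r, \delta) \cdot \ell / p$.

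Transfer these data to the finite factors. Because $f_m$ has finite image, the $\delta$-homomorphism property, the definitions of $Y_m$ (using Proposition~\ref{prop:Ydef}) and $Z$, and the equality $[G:H] = p$ are all first-order, as is the quantitative regularity statement ``for all $g \in G_i \setminus Z_i$, either $|gY_{m,i} \cap A_i| < \eta_0 |G_i|$ or $|gY_{m,i} \setminus A_i| < \eta_0 |G_i|$'', which descends from the vanishing of $\mu(gY_m \cap A)$ and $\mu(gY_m \setminus A)$ in $G$ via {\L}o\'{s}'s theorem applied with the single positive threshold $\eta_0$. Thus for some $J \in \cU$ and every $i \in J$ there are definable $H_i \trianglelefteq G_i$, $Y_{m,i} \seq H_i$, and $Z_i \seq G_i$ satisfying these properties. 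Since $\delta < \alpha_{\T^r}$ and $G_i$ is a finite (hence compact) group, Corollary~\ref{cor:findBohr} produces a genuine $(\delta, r)$-Bohr neighborhood $B_i \seq Y_{m,i}$ in $H_i$, and Proposition~\ref{prop:Bohrgeneric} gives $|B_i| \geq \ell |H_i| = \ell |G_i|/p$, so that $\eta_0 |G_i| \leq \gamma(p, r, \delta)|B_i|$, delivering the desired regularity bound.

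The ``moreover'' clause is automatic because every $\theta^r$-definable set in $G$ is a Boolean combination of translates of the form $yAu\inv$, and the defining formulas of $H_i$, $Y_{m,i}$, and $Z_i$ transfer this Boolean description to $G_i$. Choosing any $i \in J$ with $i \geq \max(p, r, \lceil 1/\delta \rceil)$ now produces a valid witness $(H_i, B_i, Y_{m,i}, Z_i)$ for $(G_i, A_i)$ with parameters within the range forbidden by the contradiction hypothesis, completing the proof. The principal subtlety is the transfer step: one must convert the generic compact domination ``$\mu(\cdot) = 0$'' of Theorem~\ref{thm:UPgen} into a single quantitative threshold $\eta_0$ calibrated against the normalization $\gamma(m, r, \delta)|B|$ in the statement, while simultaneously extracting an honest Bohr neighborhood inside the approximate one in each $G_i$ with the same $(\delta, r)$-parameters via Corollary~\ref{cor:findBohr}.
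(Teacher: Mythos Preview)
Your proof is correct and follows essentially the same approach as the paper's: both argue by contradiction via an ultraproduct, apply Theorem~\ref{thm:UPgen} with $t=3$, transfer the finite-image $\delta$-homomorphism and the regularity statement (with the $\mu=0$ conditions replaced by a single positive threshold calibrated via Proposition~\ref{prop:Bohrgeneric}) through {\L}o\'{s}, and then invoke Corollary~\ref{cor:findBohr} on the finite side to produce the genuine Bohr neighborhood inside the approximate one. The only cosmetic difference is that the paper takes $\alpha=\alpha_{\T^1}$ (applying Corollary~\ref{cor:findBohr} with $L=\T^1$ and exponent $r$) whereas you use $\alpha_{\T^n}$; both choices are valid.
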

\begin{proof}
Suppose not. Then we have $k\geq 1$, $\epsilon>0$, and  $\gamma\colon (\Z^+)^2\times(0,1]\to\R^+$ witnessing this. In particular, for any $n\geq 1$, there is a finite group $G_n$ and a $k$-NIP subset $A_n\seq G_n$ such that, for any $H,B,Y,Z\seq G_n$, if
\begin{enumerate}[\hspace{10pt}$\ast$]
\item $H$ is a normal subgroup of $G_n$ of index $m\leq n$, and $H=G_n$ if $G_n$ is abelian, 
\item $H$, $Y$, and $Z$ are in the Boolean algebra generated by $\{gA_nh:g,h\in G_n\}$,
\item $B$ is a $(\delta,r)$-Bohr neighborhood in $H$ and $Y$ is a $\delta$-approximate $(3\delta,r)$-Bohr neighborhood in $H$, where $r\leq n$ and $\frac{1}{n}\leq\delta\leq 1$,
\item $|Z|<\epsilon|G_n|$, and  $B\seq Y\seq H$, 
\end{enumerate}
then there is some $g\in G_n\backslash Z$ such that $|gY\cap A_n|\geq \gamma(m,r,\delta)|B|$ and $|gY\backslash A_n|\geq\gamma(m,r,\delta)|B|$. 

Let $\cL$ be the group language together with an extra predicate $A$, and consider each $(G_n,A_n)$ as a finite $\cL$-structure. Let $\cU$ be a nonprincipal ultrafilter on $\Z^+$, and let $G$ be a sufficiently saturated elementary extension of $M:=\prod_{\cU}(G_n,A_n)$. We identify $A=A(G)$. Let $\theta(x;y)$ be the formula $A(y\cdot x)$. Note that $\theta(x;y)$ is invariant, and $k$-NIP (in $G$) by {\L}o\'{s}'s Theorem and since $M\prec G$.  Finally, let $\alpha:=\min\{\alpha_{\T^1},1\}$, where $\alpha_{\T^1}>0$ is as in Corollary \ref{cor:findBohr}. By Theorem \ref{thm:UPgen} (with $t=3$), there are $\theta^r$-definable sets $Y,Z\seq G$ (for $Y$ we obtain $\theta^r$-definability via Proposition \ref{prop:Ydef}), a $\theta^r$-definable finite-index normal subgroup $H\leq G$, and a $\theta^r$-definable $\delta$-homomorphism $f\colon H\to \T^r$, for some $r\in\N$ and $0<\delta<\alpha$, such that:
\begin{enumerate}[\hspace{10pt}$\ast$]
\item if $G$ is abelian then $H=G$,
\item $\mu(Z)<\epsilon$,
\item $f(H)$ is finite and $Y=\{x\in H:d_{\T^r}(f(x),0_{\T^r})<3\delta\}$, and 
\item for any $g\in G\backslash Z$, either $\mu(gY\cap A)=0$ or $\mu(gY\backslash A)=0$.
\end{enumerate}

Let $m=[G:H]$, and set $\epsilon^*=\gamma(m,r,\delta)m\inv\delta^r>0$.  Let $\Lambda=f(H)$ and, given $\lambda\in\Lambda$, let $F(\lambda)=f\inv(\lambda)$. Then each $F(\lambda)$ is $\theta^r$-definable by Remark \ref{rem:defmap}. 

Fix $\langle\theta^r\rangle$-formulas $\phi(x;\ybar)$, $\psi(x;\zbar)$, $\zeta(x;\ubar)$, and $\xi_\lambda(x;\vbar_\lambda)$ for $\lambda\in\Lambda$ (without parameters) such that $H$, $Y$, $Z$, and $F(\lambda)$ for $\lambda\in\Lambda$ are defined by instances of $\phi(x;\ybar)$, $\psi(x;\zbar)$, $\zeta(x;\ubar)$, and $\xi_\lambda(x;\vbar_\lambda)$,  respectively. Given $n\in\N$, let $\mu_n$ denote the normalized counting measure on $G_n$. Let $d$ denote $d_{\T^r}$. Define $I\seq\Z^+$ to be the set of $n\in \Z^+$ such that, for some tuples $\abar_n$, $\bbar_n$, $\cbar_n$, and $\dbar_{n,\lambda}$ for $\lambda\in\Lambda$, 
\begin{enumerate}[$(i)$]
\item $\phi(x;\abar_n)$ defines a normal subgroup $H_n$ of $G_n$ of index $m$, and if $G$ is abelian then so is $H_n=G_n$,
\item $\zeta(x;\cbar_n)$ defines a subset $Z_n$ with $\mu_n(Z_n)<\epsilon$,
\item for each $\lambda\in\Lambda$, $\xi_\lambda(x;\dbar_{n,\lambda})$ defines a subset $F_n(\lambda)$ of $H_n$, and $(F_n(\lambda))_{\lambda\in \Lambda}$ forms a partition of $H_n$,
\item if $f_n\colon H_n\to \Lambda$ is defined so that $f_n(x)=\lambda$ if and only if $x\in F_n(\lambda)$, then $f_n$ is a $\delta$-homomorphism from $H_n$ to $\T^r$,
\item $\psi(x;\bbar_n)$ defines the set $Y_n=\{x\in H_n:d(f_n(x),0_{\T^r})<3\delta\}$, and
\item for all $g\in G_n\backslash Z_n$, either $\mu_n(gY_n\cap A_n)< \epsilon^*$ or $\mu_n(gY_n\backslash A_n)< \epsilon^*$.
\end{enumerate}
We claim that $I\in\cU$ by {\L}o\'{s}'s Theorem and since $M\prec G$. In other words, the claim is that conditions $(i)$ through $(vi)$ are first-order expressible (possibly using the expanded language discussed in Remark \ref{rem:Losmu}).   This is clear for $(i)$, $(ii)$, $(iii)$, and $(vi)$, and the only subtleties lie in $(iv)$ and $(v)$. In both cases, the crucial point is that $\Lambda$ is finite, and so these conditions can be described by first-order sentences (using similar ideas as in Proposition \ref{prop:Ydef}). For instance, to express condition $(iv)$, fix $\lambda\in \Lambda$, and let $P_\lambda=\{(\lambda_1,\lambda_2)\in \Lambda^2: d(\lambda,\lambda_1+\lambda_2)<\delta\}$. Let $\sigma_\lambda$ be a sentence expressing that for any $x,y$, if $x\cdot y\in F(\lambda)$ then $x\in F(\lambda_1)$ and $y\in F(\lambda_2)$ for some $(\lambda_1,\lambda_2)\in P(\lambda)$. Then the conjunction  $\bigwedge_{\lambda\in\Lambda}\sigma_\lambda$ expresses precisely that $f$ is a $\delta$-homomorphism.  The details for condition $(v)$ are similar and left to the reader.

Since $I\in\cU$, we may fix $n\in I$ such that $n\geq\max\{m,r,\delta\inv\}$. Since $0<\delta<\alpha$ and $Y_n$ is a $\delta$-approximate $(3\delta,r)$-Bohr neighborhood in $H_n$, it follows from Corollary \ref{cor:findBohr} that $Y_n$ contains a $(\delta,r)$-Bohr neighborhood $B$ in $H_n$. So, by choice of $(G_n,A_n)$, there must be $g\in G_n\backslash Z_n$ such that $\mu_n(gY_n\cap A_n)\geq \gamma(m,r,\delta)\mu_n(B)$ and $\mu_n(gY_n\backslash A_n)\geq \gamma(m,r,\delta)\mu_n(B)$. So, to obtain a contradiction (to $(vi)$), it suffices to show that $\epsilon^*\leq \gamma(m,r,\delta)\mu_n(B)$, i.e.\ (by choice of $\epsilon^*$), show $m\inv\delta^r\leq \mu_n(B)$.  To see this, note that $|B|\geq\delta^r|H_n|$ by Proposition \ref{prop:Bohrgeneric} (applied with $L=\T^r$, so $\ell_{\delta/2}=\delta^r$), and so $|B|\geq \delta^r m\inv|G_n|$ since $[G_n:H_n]=m$.
\end{proof}

We now prove the main result for NIP subsets of arbitrary finite groups.

\begin{theorem}\label{thm:mainNIP}
For any $k\geq 1$ and $\epsilon>0$ there is $n=n(k,\epsilon)$ such that the following holds. Suppose $G$ is a finite group and $A\seq G$ is $k$-NIP. Then there are
\begin{enumerate}[\hspace{5pt}$\ast$]
\item  a normal subgroup $H\leq G$ of index $m\leq n$, 
\item  a $(\delta,r)$-Bohr neighborhood $B$ in $H$, where $r\leq n$ and $\frac{1}{n}\leq\delta\leq 1$, and
\item a subset $Z\seq G$, with $|Z|<\epsilon|G|$,
\end{enumerate}
satisfying the following properties. 
\begin{enumerate}[$(i)$]
\item \textnormal{(structure)} There is a set $D\seq G$, which is a union of at most $m(\frac{2}{\delta})^r$ translates of $B$, such that 
\[
|(A\smd D)\backslash Z|< \epsilon|B|.
\]
\item \textnormal{(regularity)} For any $g\in G\backslash Z$, either $|gB\cap A|<\epsilon|B|$ or $|gB\backslash A|< \epsilon|B|$.
\end{enumerate}
Moreover, $H$ and $Z$ are in the Boolean algebra generated by $\{gAh:g,h\in G\}$, and if $G$ is abelian then we may assume $H=G$.
\end{theorem}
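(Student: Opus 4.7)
The plan is to deduce Theorem \ref{thm:mainNIP} directly from Lemma \ref{lem:downstairs} by choosing the parameter function $\gamma$ so that the approximate regularity provided for $gY$ gives both the pointwise regularity for $gB$ in $(ii)$ and enough slack for a covering-and-sorting argument for $(i)$, exactly analogous to what was done in the ultraproduct setting in the proof of Theorem \ref{thm:UPgen}$(i)$.

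Given $\epsilon>0$, I would fix in advance the function
\[
\gamma(m,r,\delta) := \frac{\epsilon\,\delta^r}{2^{r+2}\,m},
\]
and let $n=n(k,\epsilon,\gamma)$ be the output of Lemma \ref{lem:downstairs}. For a finite group $G$ and a $k$-NIP set $A\seq G$, the lemma yields $H\leq G$ normal of index $m\leq n$, a $(\delta,r)$-Bohr neighborhood $B=B^r_{\tau,\delta}$ in $H$ with $r\leq n$ and $\delta\geq 1/n$, a $\delta$-approximate $(3\delta,r)$-Bohr neighborhood $Y\supseteq B$ in $H$, and a set $Z\seq G$ with $|Z|<\epsilon|G|$, together with the dichotomy that for every $g\in G\setminus Z$ one of $|gY\cap A|$, $|gY\setminus A|$ is strictly less than $\gamma(m,r,\delta)|B|$, and with $H$ and $Z$ in the Boolean algebra generated by $\{gAh:g,h\in G\}$. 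Regularity $(ii)$ is then immediate: since $B\seq Y$, the same dichotomy holds for $gB$ in place of $gY$, and my choice of $\gamma$ satisfies $\gamma(m,r,\delta)\leq\epsilon$ for all admissible $(m,r,\delta)$.

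For $(i)$, I would cover $G\setminus Z$ by boundedly many translates of $B$ using the same homomorphism $\tau$. Set $B':=B^r_{\tau,\delta/2}$; by the triangle inequality in $\T^r$, $B'(B')\inv\seq B$. Let $F\seq G\setminus Z$ be maximal such that the translates $\{fB':f\in F\}$ are pairwise disjoint. Proposition \ref{prop:Bohrgeneric} applied with $L=\T^r$ (for which $\ell_{\delta/4}=(\delta/2)^r$) gives $|B'|\geq(\delta/2)^r|H|$, hence $|F|\leq m(2/\delta)^r$, while maximality yields $G\setminus Z\seq \bigcup_{f\in F}fB$ since any $g\in G\setminus Z$ has $gB'\cap fB'\neq\emptyset$ for some $f\in F$ and thus $g\in fB'(B')\inv\seq fB$. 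Now put $I:=\{f\in F:|fB\setminus A|<\gamma(m,r,\delta)|B|\}$ and $D:=\bigcup_{f\in I}fB$; by the dichotomy, $f\in F\setminus I$ forces $|fB\cap A|<\gamma(m,r,\delta)|B|$, so
\[
|(A\smd D)\setminus Z|\leq \sum_{f\in I}|fB\setminus A|+\sum_{f\in F\setminus I}|fB\cap A|\leq |F|\,\gamma(m,r,\delta)\,|B|<\epsilon|B|
\]
by the choice of $\gamma$. Since $D$ is by construction a union of translates of $B$, this gives $(i)$, and since $H$ and $Z$ were already in the Boolean algebra generated by translates of $A$, the moreover clause is preserved.

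The main work is packaged inside Lemma \ref{lem:downstairs}; the only delicate point in this final step is the bookkeeping behind $\gamma$, which must simultaneously dominate $\epsilon$ (for regularity) and beat $\epsilon/|F|$ with $|F|\leq m(2/\delta)^r$ (for the structural error). Since $m,r,\delta$ are themselves supplied by the lemma and depend on $\gamma$, it is important that $\gamma$ is chosen as a \emph{function} of $(m,r,\delta)$ before invoking Lemma \ref{lem:downstairs}; the displayed formula is essentially forced by the Bohr covering bound.
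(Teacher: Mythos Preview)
Your proof is correct and follows essentially the same route as the paper: choose $\gamma$ in advance so that Lemma \ref{lem:downstairs} gives regularity for $gY$ (hence for $gB\subseteq gY$), then cover $G\setminus Z$ by boundedly many translates of $B$ via a half-radius Bohr set $B'$ with $B'(B')^{-1}\subseteq B$, sort the translates into $I$ and $F\setminus I$, and sum the errors. The only cosmetic differences are that the paper takes $\gamma(m,r,\delta)=\epsilon m^{-1}(\delta/4)^r$ and cites Proposition \ref{prop:separating} for the covering, whereas you use the slightly sharper $\ell_{\delta/4}=(\delta/2)^r$ and carry out the maximal-packing argument by hand.
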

\begin{proof}
Fix $k\geq 1$ and $\epsilon>0$. Define $\gamma\colon(\Z^+)^2\times(0,1]\to\R^+$ such that $\gamma(x,y,z)=\epsilon x\inv(\frac{z}{2})^y$. Let $n=n(k,\epsilon,\gamma)$ be given by Lemma \ref{lem:downstairs}.  Fix a finite group $G$ and a $k$-NIP subset $A\seq G$. By Lemma \ref{lem:downstairs}, there are
\begin{enumerate}[\hspace{10pt}$\ast$]
\item  a normal subgroup $H\leq G$ of index $m\leq n$,
\item  a subset $Y\seq H$, 
\item a $(\delta,r)$-Bohr neighborhood $B$ in $H$, where $r\leq n$ and $\frac{1}{n}\leq\delta\leq 1$, and 
\item a set $Z\subseteq G$, with $|Z|< \epsilon |G|$, 
\end{enumerate}
such that $H,Z$ are in the Boolean algebra generated by $\{gAh:g,h\in G\}$, $B\seq Y\seq G$, and for all $g\in G\backslash Z$, either $|gY\cap A|< \gamma(m,r,\delta)|B|$ or $|gY\backslash A|< \gamma(m,r,\delta)|B|$. Moreover, if $G$ is abelian then we may assume $H=G$. Since $B\seq Y$ and $\gamma(m,r,\delta)\leq \epsilon$, this immediately yields condition $(ii)$. 

For condition $(i)$, we argue as in the proof of Theorem \ref{thm:UPgen}. First, by Proposition \ref{prop:Bohrlike}$(b)$, there is a set $F\seq G\backslash Z$ such that $|F|\leq m(\frac{2}{\delta})^r$ and $G\backslash Z\seq FB$. Let $I=\{g\in F:|gB\backslash A|<\gamma(m,r,\delta)|B|\}$, and note that if $g\in F\backslash I$ then $|gB\cap A|<\gamma(m,r,\delta)|B|$. Let $D=IB$. Since $G\backslash Z\seq FB$, we have
\[
A\smd D\seq Z\cup \bigcup_{g\in I}(gB\backslash A)\cup\bigcup_{g\in F\backslash I}(gB\cap A),\text{~~and so}
\]
\begin{equation*}
|(A\smd D)\backslash Z| \leq \sum_{g\in I}|gB\backslash A|+\sum_{g\in F\backslash I}|g B\cap A|< |F|\gamma(m,r,\delta)|B|\leq \epsilon|B|.\qedhere
\end{equation*}
\end{proof}

We end this section with some remarks on NIP subsets of finite \emph{simple} groups. To motivate this, we first consider the stable case. In particular, given $k\geq 1$ and $\epsilon>0$, it follows immediately from Theorem \ref{thm:CPT1} that if $G$ is a sufficiently large (depending on $k$ and $\epsilon$) finite simple group, and $A\seq G$ is $k$-stable, then $|A|<\epsilon|G|$ or $|A|>(1-\epsilon)|G|$.
On the other hand, this conclusion fails for NIP subsets of abelian finite simple groups (e.g., by the examples of NIP sets in $\Z/p\Z$ mentioned in the introduction). So it is interesting to observe that for \emph{nonabelian} finite simple groups, one recovers the same ``triviality" for NIP sets. 

\begin{corollary}\label{cor:NIPsimple}
Given $k\geq 1$ and $\epsilon>0$, there is $m=m(k,\epsilon)$ such that the following holds. Suppose $G$ is a nonabelian finite simple group, with $|G|>m$, and $A\seq G$ is $k$-NIP. Then $|A|<\epsilon|G|$ or $|A|>(1-\epsilon)|G|$.
\end{corollary}
\begin{proof}
Let $m=n(k,\frac{\epsilon}{2})$ be as in Theorem \ref{thm:mainNIP}. Suppose $G$ is a nonabelian finite simple group, with $|G|>m$, and $A\seq G$ is $k$-NIP. Then we have a normal subgroup $H\leq G$ of index at most $m$, a $(\delta,r)$-Bohr neighborhood $B$ in $H$ (for some $\delta$ and $r$), a set $Z\seq G$ with $|Z|<\frac{\epsilon}{2}|G|$, and a union $D$ of translates of $B$ such that $|(A\smd D)\backslash Z|<\frac{\epsilon}{2}|B|$. So $|A\smd D|\leq |(A\smd D)\backslash Z|+|Z|<\epsilon|G|$. Since $G$ is simple and $|G|>m$, we have $H=G$. Now $B$ contains the kernel $K$ of a homomorphism from $G$ to $\T^r$. So $G/K$ is abelian, which implies $K=B=G$ since $G$ is simple and nonabelian. Now $D$ is either $\emptyset$ or $G$, and the result follows.
\end{proof}

 The previous proof highlights the significance of obtaining Bohr neighborhoods defined from \emph{abelian} compact  Lie groups, which is ultimately possible thanks to Theorem \ref{thm:comm}.

\begin{remark}
Corollary \ref{cor:NIPsimple} is one of the few results on NIP sets in \emph{nonabelian} finite groups for which we know an explicit bound. Specifically, in \cite{CoBogo}, the first author used techniques of Alon, Fox, and Zhao \cite{AFZ} and a result of Gowers \cite{GowQRG} on ``quasirandom" groups to obtain the bound $m=\exp(c(90/\epsilon)^{6k-6})$ in Corollary \ref{cor:NIPsimple}, where $c$ is an absolute constant (see \cite[Remark 8.7]{CoBogo}).  

As for stable sets, the proofs of Corollaries 1 and 3 in \cite{TeWo2} yield an explicit lower bound of the form $|G|\geq \exp(c_k(\epsilon^{\nv d_k}))$ for the analogue of Corollary \ref{cor:NIPsimple} in which $A$ is $k$-stable and $G$ is an \emph{abelian} finite simple group (i.e.,  a cyclic group of prime order). The work in \cite{CoQSAR} removes the exponential and yields improved values for the constants $c_k,d_k$ (see \cite[Theorem 1.3]{CoQSAR}).
\end{remark}

\section{Distal arithmetic regularity}\label{sec:distal}

In this section, we  adapt the preceding results to the case of NIP \emph{fsg} groups with smooth left-invariant measures (e.g., \emph{fsg} groups definable in \emph{distal} theories). In contrast to previous results, where we focused on a single NIP formula, here  we will operate in the setting of groups definable in an NIP theory.

In this section, we let $T$ be a complete theory and we work in a saturated model $M^*$. Given $M\preceq M^*$ and an $M$-definable set $X$ in $M^*$, a \textbf{Keisler measure on $X$ over $M$} is a finitely additive probability measure defined on the Boolean algebra of $M$-definable subsets of $X$. If $M^*=M$ then $\mu$ is a \textbf{global Keisler measure on $X$}. If $T$ is NIP, then a global Keisler measure $\mu$ on a definable set $X$ is \textbf{generically stable} if there is a small model $M\prec M^*$ such that $X$ is $M$-definable, $\mu$ is \emph{$M$-definable} (i.e., for any formula $\phi(x;\ybar)$ with $x$ in the same sort as $X$, the map $\bbar\mapsto \mu(\phi(x;\bbar)\cap X)$ is $M$-definable) and $\mu$ is \emph{finitely satisfiable} in $M$ (i.e., if $Y\seq X$ is definable and $\mu(Y)>0$ then $Y\cap M\neq\emptyset$). See \cite[Section 7.5]{Sibook}) for details.

\begin{definition}
($T$ is NIP.) A definable group $G$ is \emph{\textbf{fsg}} if it admits a generically stable left-invariant (global) Keisler measure.
\end{definition}

We note that this is not the original definition of \emph{fsg} (which is given in \cite{HPP}), but rather the right characterization (in NIP theories) for our purposes (see \cite[Proposition 6.2]{HPP} and \cite[Remark 4.2]{HPS}). The significance of this notion in the context of our work is illustrated by the following example.

\begin{example}\label{ex:pseudofinite}
If $T$ is NIP and $G$ is a definable pseudofinite group, then $G$ is \emph{fsg} since the pseudofinite counting measure on $G$ is generically stable. This follows directly from the VC-Theorem (see, e.g., \cite[Example 7.32]{Sibook}, \cite[Section 2]{CPpfNIP}).
\end{example}

The \emph{fsg} property for a definable group in an NIP theory has strong consequences. For example, we will use the following result from \cite{HP}.

\begin{theorem}[Hrushovski \& Pillay \textnormal{\cite[Theorem 7.7]{HP}}]
An fsg group definable in an NIP theory admits a unique left-invariant Keisler measure $\mu$, which is also the unique right-invariant Keisler measure.
\end{theorem}

In NIP theories, definable \emph{fsg} groups also satisfy  a generic compact domination statement similar to Theorem \ref{thm:G00CP}$(d)$ (see \cite[Corollary 4.9]{SimGCD}). However, in this section, we focus on a certain strengthening of generic compact domination for definable \emph{fsg} groups in NIP theories whose unique left-invariant Keisler measure is  \emph{smooth}. 

\begin{definition} 
Let $X$ be definable in $M^*$. A global Keisler measure $\mu$ on $X$ is \textbf{smooth} if there is a small model $M\prec M^*$ such that $X$ is $M$-definable and $\mu$ is the unique global Keisler measure on $X$ extending $\mu|_{M}$. 
\end{definition}

If $T$ is NIP, then any smooth measure is generically stable (see Proposition 7.10 and Theorem 7.29 in \cite{Sibook}), and so any definable group with a smooth left-invariant Keisler measure is \emph{fsg}. A place to find smooth measures is in the setting of \emph{distal theories}, which were introduced by Simon in \cite{SiDND}. For our purposes we will take the following characterization as a {\em definition} of distality (see \cite[Theorem 1.1]{SiDND}).

\begin{definition} $T$ is \textbf{distal} if it is NIP and every global generically stable Keisler measure is smooth. 
\end{definition}

So, in particular, if $T$ is distal and $G$ is a definable \emph{fsg} group, then the unique left-invariant Keisler measure on $G$ is smooth. In \cite[Proposition 8.41]{Sibook}, Simon shows that if $T$ is countable and NIP, and $G$ is a definable group with a smooth left-invariant Keisler measure, then generic compact domination for $G$ can be strengthened to outright ``compact domination". 
We give a formulation of this result suitable for our applications.  First, recall that if $T$ is NIP and $G$ is a definable group, then $G$ has a smallest type-definable subgroup of bounded index, denoted $G^{00}$, which is an intersection of at most $|T|$ definable sets (see \cite{ShG00} and/or \cite[Proposition 6.1]{HPP}). The reader should compare the following result to Lemma \ref{lem:oneY}.

\begin{lemma} \label{lem:CD} Assume $T$ is countable and NIP, and $G$ is a definable fsg group. Let $\mu$ be the unique left-invariant Keisler measure on $G$, and assume $\mu$ is smooth. Let $(W_i)_{i=0}^\infty$ be a decreasing sequence of definable sets such that $G^{00}=\bigcap_{i=0}^\infty W_i$. Fix a definable set $A\seq G$. Then, for any $\epsilon>0$, there is a definable set $Z\seq G$ and some $i\in\N$ such that $\mu(Z)<\epsilon$ and, for any $g\in G\backslash Z$, either $gW_i\cap A=\emptyset$ or $gW_i\seq A$.
\end{lemma}
\begin{proof}
Let $F=\{C\in G/G^{00}:C\cap A\neq\emptyset\text{ and }C\cap (G\backslash A)\neq\emptyset\}$. Then $F$ is closed by Fact \ref{fact:QLT}$(b)$, and $\eta_{G/G^{00}}(F)=0$ by \cite[Proposition 8.41]{Sibook}. Now fix $\epsilon>0$. By Fact \ref{fact:QLT}$(e)$, there is a definable set $Z\seq G$ such that $\mu(Z)<\epsilon$ and $\{a\in G:aG^{00}\in F\}\seq Z$. By saturation of $G$ and since $(W_i)_{i=0}^\infty$ is decreasing, there is some $i\in\N$ such that for any $g\in G\backslash Z$, either $gW_i\cap A=\emptyset$ or $gW_i\seq A$. 
\end{proof}

We now prove analogues of Theorems \ref{thm:UPprof} and \ref{thm:UPgen} for  an \emph{fsg} group $G$, definable in an NIP theory, such that the unique left-invariant Keisler measure on $G$ is smooth.  In these results,  the assumptions are stronger in the sense that the whole theory is assumed to be NIP. Moreover, in the conclusions we have definability of the data, but no claims about definability in a certain Boolean fragment (see Remark \ref{rem:fsgNIP}). On the other hand, we have outright inclusion or disjointness, rather than up to $\epsilon$, which yields stronger structure and regularity statements.

\begin{theorem}\label{thm:distalprof}
Assume $T$ is NIP. Let $G$ be a definable fsg group, and let $\mu$ be the unique left-invariant Keisler measure on $G$. Suppose $\mu$ is smooth (e.g., if $T$ is distal) and $G/G^{00}$ is profinite. Fix a definable set $A\seq G$ and some $\epsilon > 0$. Then there are
\begin{enumerate}[\hspace{5pt}$\ast$]
\item a definable finite-index normal subgroup $H$ of $G$, and
\item a set $Z\seq G$, which is a union of cosets of $H$ with $\mu(Z)< \epsilon$, 
\end{enumerate}
satisfying the following properties.
\begin{enumerate}[$(i)$]
\item \textnormal{(structure)} $A\backslash Z$ is a union of cosets of $H$.
\item \textnormal{(regularity)} For any $g\in G\backslash Z$, either $gH\cap A=\emptyset$ or $gH\seq A$.
\end{enumerate}
\end{theorem}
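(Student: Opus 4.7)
The plan is to derive Theorem \ref{thm:distalprof} from Theorem \ref{thm:distalgen} as a special case, in exact parallel to how Remark \ref{rem:expcase} derives Theorem \ref{thm:UPprof} from Theorem \ref{thm:UPgen}. The key observation is that when $G/G^{00}$ is profinite, the compact connected Lie group produced by Theorem \ref{thm:distalgen} must be trivial, and the associated approximate Bohr chain collapses to $H$ itself.

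Apply Theorem \ref{thm:distalgen} to $X$ and $\epsilon$ to obtain a definable finite-index normal subgroup $H\leq G$, a compact connected Lie group $L$, a definable surjective homomorphism $\pi\colon H\to L$, a definable set $Z_0\seq G$ with $\mu(Z_0)<\epsilon$, and (for any fixed $t\geq 1$) an approximate Bohr chain $(Y_m)_{m=0}^{\infty}$ in $H$ with $\ker\pi=\bigcap_m Y_m$. The non-local analogue of Proposition \ref{prop:Bohr} used inside the proof of Theorem \ref{thm:distalgen} produces $\pi$ as a homomorphism whose kernel contains $G^{00}$, so the quotient map $G/G^{00}\to G/\ker\pi$ is a continuous surjective homomorphism of compact groups. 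Since continuous surjective images of profinite groups are profinite, $G/\ker\pi$ is profinite and hence totally disconnected. But $L\cong H/\ker\pi$ is a compact connected subgroup of $G/\ker\pi$, which forces $L=\{1\}$. Consequently $\ker\pi=H$, and each $Y_m$ satisfies $\ker\pi\seq Y_m\seq H$, so $Y_m=H$.

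Condition (ii) of Theorem \ref{thm:distalgen} now says directly that, for any $g\in G\backslash Z_0$, either $gH\cap X=\emptyset$ or $gH\seq X$. To promote $Z_0$ to a union of cosets of $H$, replace it by $Z=\{a\in G:aH\seq Z_0\}$. This set is definable (the condition $aH\seq Z_0$ is first-order, since $H$ and $Z_0$ are both definable), is a union of cosets of $H$, and satisfies $\mu(Z)\leq\mu(Z_0)<\epsilon$. For (ii): any $g\in G\backslash Z$ admits some $g'\in gH$ with $g'\notin Z_0$, and the dichotomy for $g'$ transfers to $gH=g'H$. For (i): if $x\in X\backslash Z$, then $xH\not\seq Z_0$, so we may pick $x'\in xH\backslash Z_0$; since $x\in X\cap xH\neq\emptyset$, the dichotomy for $x'$ forces $xH\seq X$, and since $Z$ is a union of cosets with $x\notin Z$, we also have $xH\cap Z=\emptyset$, hence $xH\seq X\backslash Z$.

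The main obstacle is the triviality argument for $L$; everything else is routine repackaging of the output of Theorem \ref{thm:distalgen}. An alternative, self-contained proof mirrors the proof of Theorem \ref{thm:UPprof}: use a non-local analogue of Corollary \ref{cor:G00=G0} to write $G^{00}=\bigcap_t H_t$ as a decreasing intersection of definable finite-index normal subgroups $H_t$, then combine Fact \ref{fact:CD} with the compactness argument of Lemma \ref{lem:oneY}---replacing each occurrence of ``$\mu(\cdot)=0$'' and ``$\mu(\cdot)>0$'' by ``$=\emptyset$'' and ``$\neq\emptyset$'' respectively, which is legal by the stronger compact domination available here---to select a suitable $H_t$, and finish with the same cosetwise passage from $Z_0$ to $Z$.
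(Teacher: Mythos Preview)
Your proposal is correct and follows essentially the same approach as the paper: both derive the result as the special case of Theorem \ref{thm:distalgen} where $L$ is forced to be trivial (so $Y_m=H$), and both mention the alternative direct proof via Fact \ref{fact:CD} and an approximation of $G^{00}$ by definable finite-index subgroups. Your explicit replacement of $Z_0$ by the union-of-cosets $Z=\{a:aH\seq Z_0\}$ and the verification of (i) and (ii) are exactly the routine repackaging the paper leaves implicit (it is the same move as in the proof of Theorem \ref{thm:UPprof}).
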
 
\begin{proof} 
The proof is similar to that of Theorem \ref{thm:UPprof}. First, we may restrict to a countable language in which $G$ and $A$ are definable and $\mu$ is still smooth  (see \cite[Lemma 7.8]{Sibook} and subsequent remarks). By Fact \ref{fact:compactG}$(c)$, we also preserve the hypothesis that $G/G^{00}$ is profinite. 

 By Lemma \ref{lem:Lie}, we may fix a decreasing sequence $(H_i)_{i=0}^\infty$ of definable finite-index normal subgroups of $G$ such that $G^{00}=\bigcap_{i=0}^\infty H_i$. By Lemma \ref{lem:CD}, we have a definable set $Z$ and some $H=H_i$ satisfying condition $(ii)$. As in the proof of Theorem \ref{thm:UPprof}, we may assume $Z$ is a union of cosets of $H$. For condition $(i)$, set $D=\{g\in G:gH\seq A\text{ and }gH\cap Z=\emptyset\}$. Then $D$ is a union of cosets of $H$, and $D\seq A\seq D\cup Z$. Since $D\cap Z=\emptyset$, we have $A\backslash Z=D$.
\end{proof}

Now we prove the general statement.

\begin{theorem}  \label{thm:distalgen}
Assume $T$ is NIP. Let $G$ be a definable fsg group, and let $\mu$ be the unique left-invariant Keisler measure on $G$. Suppose $\mu$ is smooth (e.g., if $T$ is distal). Fix a definable set $A\seq G$ and some $\epsilon > 0$. Then there are
\begin{enumerate}[\hspace{5pt}$\ast$]
\item a definable finite-index normal subgroup $H$ of $G$,
\item a compact connected metric Lie group $(L,d)$, 
\item a definable homomorphism $\pi\colon H\to L$, and 
\item a definable set $Z\seq G$, with $\mu(Z)< \epsilon$, 
\end{enumerate}
such that, for any integer $t\geq 1$, there is
\begin{enumerate}[\hspace{5pt}$\ast$]
\item a definable $(t,\pi)$-approximate Bohr chain 
$(Y_{m})_{m=0}^{\infty}$ in $H$, 
\end{enumerate}
satisfying the following properties, for any $m\in\N$.
\begin{enumerate}[$(i)$]
\item \textnormal{(structure)} There is $D_m\seq G$, which is a union of finitely many left translates of $Y_m$, such that $D_m\seq A\seq D_m\cup Z$. 
\item \textnormal{(regularity)} For any  $g\in G\backslash Z$, either $gY_m\cap A=\emptyset$ or $gY_m\seq A$.
\end{enumerate}
Moreover, if $G$ is pseudofinite then we may assume $L=\T^n$ for some $n\in\N$; and if $G/G^{00}$ is abelian then we may assume $L=\T^n$ for some $n\in\N$ and $H=G$.
\end{theorem}
\begin{proof}
The proof is similar to that of Theorem \ref{thm:UPgen}. First, as in the proof of Theorem \ref{thm:distalprof}, we may assume $T$ is in a countable language. Let $(W_i)_{i=0}^\infty$ be a decreasing sequence of definable subsets of $G$ satisfying the conditions of Proposition \ref{prop:Bohr}, with $\Gamma=G^{00}$. By Lemma \ref{lem:CD}, there is a definable set $Z\seq G$ and some $i\in\N$ such that $\mu(Z)<\epsilon$ and, if $W:=W_i$, then for all $g\in G\backslash Z$, either $gW\cap A=\emptyset$ or $gW\seq A$. Proposition \ref{prop:Bohr} associates to $W$ a definable homomorphism $\pi\colon H\to L$, with $\ker\pi\seq W$, where $H$ is a definable finite-index normal subgroup of $G$ and $(L,d)$ is compact connected metric Lie group. Note also that the ``moreover" statement is given by Proposition \ref{prop:Bohr}. By Lemma \ref{lem:Bohrapprox}, there is a definable $(t,\pi)$-approximate Bohr chain $(Y_m)_{m=0}^\infty$ in $H$. By saturation,  if $m$ is sufficiently large then $Y_m\seq W$ and so if $g\in G\backslash Z$ then either $gY_m\cap A=\emptyset$ or $gY_m\seq A$. This yields condition $(ii)$. For condition $(i)$, mimic the end of the proof of Theorem \ref{thm:UPgen} to find $D_m\seq G$, which is a union of finitely many left translates of $Y_m$, such that $D_m\seq A\seq D_m\cup Z$ (replace each occurrence of $\mu(A\cap gY_m)=0$ with $A\cap gY_m=\emptyset$, and each occurrence of $\mu(A\backslash gY_m)=0$ with $gY_m\seq A$).
\end{proof}

Since Theorems \ref{thm:distalprof} and \ref{thm:distalgen} use global assumptions on the theory, a fully general statement for applications to finite groups would be rather cumbersome to state. However, one concludes, in a routine fashion, structure and regularity for suitable families of finite groups, for example a collection $\cG$ of finite $\cL$-structures expanding groups, such that any completion of $\Th(\cG)$ is distal. In the next section, we will discuss an application to the family of finite groups obtained as quotients of a compact $p$-adic Lie group by its open normal subgroups.

\begin{remark}\label{rem:fsgNIP}
Results along the lines of Theorems \ref{thm:UPprof} and \ref{thm:UPgen} can also be shown for \emph{fsg} groups definable in NIP theories, but without the smoothness assumption.  Indeed, such groups satisfy generic compact domination just as in Theorem \ref{thm:G00CP}$(d)$, but with $G^{00}_{\theta^r}$ replaced by $G^{00}$ (see \cite[Corollary 4.9]{SimGCD}). However, as discussed in \cite[Remark 1.3]{CPpfNIP}, since the unique left-invariant measure on an NIP \emph{fsg} group $G$ is generically stable, one could fix an invariant formula $\theta(x;\ybar)$ and construct $G^{00}_{\theta^r}$ as in Theorem \ref{thm:G00CP}. This yields structure and regularity theorems as before with additional information about the definability of the data. Precisely: 

Assume $T$ is NIP. Suppose $G$ is definable and \emph{fsg}, and let $\mu$ be the unique left-invariant Keisler measure on $G$. Without loss of generality, assume $G=M^*$. Fix an invariant formula $\theta(x;\ybar)$. Then, for any $\theta^r$-definable $A\seq G$ and any $\epsilon>0$, we have the conclusion of Theorem \ref{thm:UPgen}, except with $\T^n$ replaced by some compact connected metric Lie group $(L,d)$. If $G/G^{00}_{\theta^r}$ is profinite, then the conclusion of Theorem \ref{thm:UPprof} holds exactly as stated. 

It would be interesting to pursue notions of smoothness for local measures, or ``local distality" for formulas, and recover local versions of Theorems \ref{thm:distalgen} and \ref{thm:distalprof}.
For instance, one might consider an NIP formula $\theta(x,\ybar)$ such that every generically stable global Keisler measure on the Boolean algebra of $\theta$-formulas is smooth.
\end{remark}

\section{Compact $p$-adic analytic groups}\label{sec:padic}

In this section, we apply Theorem \ref{thm:distalprof} to the setting of compact $p$-adic analytic groups (see Theorems \ref{thm:padic1} and \ref{thm:padic2}).  
We assume some familiarity with the $p$-adic field $\Q_{p}$ and $p$-adic model theory. See \cite{Bel-p-adic}  for further reading. The topology on $\Q_{p}$  is given by the valuation $v$ where open neighborhoods of a point $a$ are defined by $v(x-a)\geq n$ for $n\in \Z$.  The topology on $\Q_{p}^{n}$ is the product topology.  A \textbf{$p$-adic analytic function} is a function $f$, from some open $V\subseteq \Q_{p}^{n}$ to $\Q_{p}$, such that for every $a\in V$, there is an open neighborhood of $a\in V$ in which $f$ is given by a convergent power series. We obtain the notions of a $p$-adic analytic manifold and a $p$-adic analytic (or Lie) group. Recall that any compact $p$-adic analytic group is profinite (see also \cite[Section 2.12.1]{RZbook}).

We let $\Q_{p}^{\an}$ denote the expansion of the field $(\Q_{p},+,\cdot)$ by symbols for all convergent (in $\Z_p$)  power series in $\Z_{p}[[X_{1}, \ldots, X_{n}]]$ for all $n$.  Then any compact $p$-adic analytic manifold or group is seen to be naturally definable in the structure $\Q_{p}^{\an}$ (we conflate definable and interpretable at this point). For example, given a compact $p$-adic analytic group, we can find an atlas consisting of finitely many open definable (even semialgebraic) sets with analytic transition functions and such that group operation is analytic when read in the charts.
 When we talk about a compact $p$-adic analytic group being definable in $\Q_{p}^{\an}$ we will mean definable in such a manner.

  We note that $\Th(\Q_p^{\an})$ is distal. Indeed, this follows from several results in the literature: distality of $\Th(\Q_{p}, +,\cdot)$ \cite[Example 9.20]{Sibook}, \emph{dp}-minimality of $\Th(\Q_{p}^{\an})$ (see Corollary 7.9 and Remark 7.10 of \cite{ADHMS1}), and the fact that a \emph{dp}-minimal expansion of a distal and \emph{dp}-minimal theory is distal \cite[Remark 6.7]{ChSt}.  It is also well known that distality passes from $T$ to $T^{\textnormal{eq}}$ (see \cite[Exercise 9.12]{Sibook}).

The next lemma  follows from \cite{MacTePAL}, and we will give an explanation afterwards. 

\begin{lemma}\label{lem:unifdef} 
Let $K$ be a compact $p$-adic analytic group. Then the family of open normal subgroups is uniformly definable in $\Q^{\an}_p$.
\end{lemma}

\noindent 
{\em Explanation.} First, by Theorem 2.1 of \cite{MacTePAL}, $K$ has a open normal subgroup $G$  (so of finite index) which is a ``uniformly powerful pro-$p$ group of finite (topological) rank  $d$"  (see \cite{MacTePAL} for the definitions).
Proposition 1.2 of \cite{MacTePAL}, and its proof, states that $G$ is isomorphic (as a topological group) to 
$\Z_{p}^{d}$ equipped with a certain analytic group structure $\ast$ (and note  $(\Z_{p}^{d},\ast)$ is definable in $\Q_{p}^{\an}$), such that moreover the collection of open subgroups of $(\Z_{p}^{d}, \ast)$ is (uniformly) definable in $\Q_{p}^{\an}$.  By Theorem 10.5 of \cite{DdSMS}, there is an analytic (so definable in $\Q_{p}^{\an}$)  isomorphism between $G$ and $(\Z_{p}^{d},\ast)$, whereby the family of open subgroups of $G$ is uniformly definable in $\Q_{p}^{\an}$.  As $G$  has finite index and is normal in $K$, it follows that the collection of open subgroups of $K$ is uniformly definable in $\Q_{p}^{\an}$.  (See the last part of the proof of \cite[Theorem 1.1$((1)\Rightarrow(2))$]{MacTePAL} on page 1043 of that paper.) \medskip

In particular, the collection of quotients of a compact $p$-adic analytic group by its open normal subgroups is a family of uniformly definable (in $\Q^{\an}_p$) finite groups. Let us fix a compact $p$-adic analytic group $K$ (so definable in $\Q_{p}^{\an}$). If $M^*\succ \Q_p^{\an}$ is sufficiently saturated, then  $K(M^*)$ denotes the group definable in the structure $M^*$ by the same formula as the one defining $K$ in $\Q_{p}^{\an}$. The next result combines the ``Claim" in \cite[Section 6]{HPP} with Corollaries 2.3 and 2.4 of \cite{OnPi}.

\begin{fact}\label{fact:Qpan} \textnormal{\cite{HPP,OnPi}} Let $M^*\succ\Q_{p}^{\an}$  be sufficiently saturated. Then $K(M^*)$ is fsg. Moreover, $K(M^*)/K(M^*)^{00}$ is isomorphic to $K$, and thus is profinite. In particular, $K(M^*)^{00}=\bigcap_{i=0}^\infty H_i(M^*)$ where $\{H_i:i\in\N\}$ is the neighborhood basis at the identity consisting of open normal subgroups of $K$.
\end{fact}

Our first application of Theorem \ref{thm:distalprof} is the following ``structure and regularity" statement for definable subsets of compact $p$-adic analytic groups. In fact, as we point out below,  this can also be seen as a fairly direct application of \cite[Proposition 2.8]{OnPi}, and an extension of certain results in that paper.

\begin{theorem} \label{thm:padic1}
Let  $K$ be a compact $p$-adic analytic group (so definable in the structure $\Q_{p}^{\an}$). Let $A\subseteq K$ be definable in $\Q_{p}^{\an}$, and let
 $\epsilon > 0$. Then there are
\begin{enumerate}[\hspace{5pt}$\ast$]
\item an open (so finite-index) normal subgroup $H$ of $K$, and
\item a set $Z\subseteq K$, which is a union of cosets of $H$ with $\eta_K(Z) < \epsilon$, 
\end{enumerate}
satisfying the following properties.
\begin{enumerate}[$(i)$]
\item \textnormal{(structure)} $A\backslash Z$ is a union of cosets of $H$.
\item \textnormal{(regularity)}  For any $g\in K\backslash Z$, either $gH\cap A = \emptyset$ or $gH\subseteq A$.
\end{enumerate}
\end{theorem}
\begin{proof}
Let $M^*\succ \Q_p^{\an}$ be sufficiently saturated. By distality, we may apply Theorem \ref{thm:distalprof} to the group $K(M^*)$ and the definable set $A(M^*)$. Since $K(M^*)^{00}$ is contained in any definable finite-index subgroup of $K(M^*)$, we may use Fact \ref{fact:Qpan} to further assume that the finite-index normal subgroup of $K(M^*)$ given by Theorem \ref{thm:distalprof} is of the form $H(M^*)$ for some open normal subgroup $H$ of $K$. So the error set is of the form $Z(M^*)$, where $Z$ is definable in $\Q_p^{\an}$. Note also that if $\mu$ is the (unique) left-invariant Keisler measure on $K(M^*)$, then $\mu(H(M^*))=\eta_K(H)=1/[K:H]$ since $H$ is a finite index subgroup of $K$, and thus $\mu(Z(M^*))=\eta_K(Z)$.  Altogether, by elementarity, we have $(i)$ and $(ii)$.
\end{proof}

\begin{remark} $~$
\begin{enumerate}
\item The proof of Theorem \ref{thm:padic1} only requires  definability of the open normal subgroups of $K$ (and not \emph{uniform} definability).
\item Proposition 2.8 of \cite{OnPi} states that $K(M^*)$ is compactly dominated via the map $K(M^*)\to K(M^*)/K(M^*)^{00}$. So we could also have deduced Theorem \ref{thm:padic1} from this result, together with the standard methods.
\item It would be interesting to prove Theorem \ref{thm:padic1} from the cell decomposition results of Denef and others (at least when $K=\Z_{p}^{n}$ for some $n$).  
\end{enumerate}
\end{remark} 

Our second application of Theorem \ref{thm:distalprof} is to the family of quotients of a compact $p$-adic analytic group by its open normal subgroups.

\begin{theorem} \label{thm:padic2}
Let $K$ be a compact $p$-adic analytic group. Let $(G_{i})_{i\in I}$ be the family of finite groups obtained as quotients of $K$ by open normal subgroups. Let $A\seq K$ be definable in $\Q_{p}^{\an}$ and for $i\in I$, let $A_{i}\seq G_{i}$ be the image of $A$ under the quotient map.  Fix $\epsilon>0$.  There is some $n=n(K,A,\epsilon)$ such that for any $i\in I$, there are
\begin{enumerate}[\hspace{5pt}$\ast$]
\item a normal subgroup $H_{i}\leq G_{i}$ of index at most $n$, and
\item a set $Z_{i}\seq G_i$, which is a union of cosets of $H_{i}$ with $|Z_{i}|<\epsilon |G_{i}|$,
\end{enumerate}
satisfying the following properties.
\begin{enumerate}[$(i)$]
\item \textnormal{(structure)} $A_{i}\backslash Z_{i}$ is a union of cosets of $H_{i}$.
\item \textnormal{(regularity)} For any $g\in G_{i}\backslash Z_{i}$, either $gH_{i}\cap A_{i} = \emptyset$ or $gH_{i}\subseteq A_{i}$.
\end{enumerate}
\end{theorem} 
\begin{proof}
It suffices to prove condition $(i)$.  Toward a contradiction, suppose we have $\epsilon>0$ such that, for any $n\geq 1$ there is some $i_n\in I$ such that if $H\leq G_{i_n}$ is a normal subgroup of index at most $n$, and $Z\seq G_{i_n}$ is a union of cosets of $H$ with $|Z|<\epsilon|G_{i_n}|$, then $A_{i_n}\backslash Z$ is not a union of cosets of $H$. 

Let $G_i=K/U_i$, where $(U_i)_{i\in I}$ lists the open normal subgroups of $K$. By Lemma \ref{lem:unifdef}, we may view $U_t$ as a formula (over $\Q_p$) in the variable $t$, and $I$ as a definable set in $\Q_p^{\an}$ in the sort for $t$. Let $S_I(\Q_p)$ be the space of types with parameters from $\Q_p$ concentrating on $I$. Let $M^*\succ\Q^{\an}_p$ be sufficiently saturated, and let $i_*\in I(M^*)$ realize an accumulation point of $\{\tp(i_n/\Q_p):n\geq 1\}$ in $S_I(\Q_p)$. Let $G=K(M^*)/U_{i_*}$. Then $G$ is a pseudofinite group definable in $M^*$, and thus is \emph{fsg} (see Example \ref{ex:pseudofinite}). Since $G$ is a definable quotient of $K(M^*)$, and $K(M^*)/K(M^*)^{00}$ is profinite by Fact \ref{fact:Qpan}, it follows (using Fact \ref{fact:compactG}$(c)$) that $G/G^{00}$ is also profinite.  By Theorem \ref{thm:distalprof}, there are $m,n,r\in\N$, with $n\geq 1$, $m<\epsilon n$, and $r\leq n$, and a formula $\phi(x;\ybar)$ (with no parameters) such that:

$(\ast)$ For some $\bbar\in (M^*)^{\ybar}$, $H:=\phi(M^*;\bbar)$ is a  normal subgroup of $G=K(M^*)/U_{i^*}$ of index $n$, and there is $Z\seq G$, which is a union of $m$ cosets of $H$ such that $(A(M^*)/U_{i^*})\backslash Z$ is a union of $r$ cosets of $H$. 

In particular, $(\ast)$ is a property of $i^*$ that can be expressed using a formula $\zeta(t)$  over $\Q_p$. So there is $n'\geq n$ such that $\Q^{\an}_p\models\zeta(i_{n'})$, which is a contradiction. 
\end{proof}

\begin{remark}
Suppose $K$ is a compact $p$-adic analytic group such that the family of open normal subgroups is (eventually) linearly ordered by inclusion (e.g., $K=\Z_p$). In this case, Theorem \ref{thm:padic2} can be deduced easily from Theorem \ref{thm:padic1}. Indeed, given a definable set $A\seq G$ and some $\epsilon>0$, let $H\leq K$ and $Z\seq K$ be as in Theorem \ref{thm:padic1}. Then all but finitely many open normal subgroups of $K$ are contained in $H$ and, given such a subgroup $U_i\leq H$, if $H_i=H/U_i$ and $Z_i=Z/U_i$, then $H_i$ and $Z_i$ satisfy conditions $(i)$ and $(ii)$ in Theorem \ref{thm:padic2}. Choosing $n$ sufficiently large, we can then let $H_i$ be trivial for any $U_i$ not contained in $H$.

It is not clear whether such an argument can be given for arbitrary $K$ (or if there is a proof of Theorem \ref{thm:padic2} not relying on Lemma \ref{lem:unifdef}). Note that the above assumption does not hold for every compact $p$-adic analytic group; for example $(\Z_p^2,+)$ does not have eventually linearly ordered open (normal) subgroups.
\end{remark}

\bibliographystyle{amsplain}
\end{document}